\documentclass{amsart}

\usepackage{amsmath,amssymb}
\usepackage{amsthm}
\usepackage{indentfirst}
\usepackage{mathrsfs}
\usepackage{graphicx}
\usepackage{hyperref}
\usepackage{xcolor}
\usepackage{fourier}
\usepackage[margin=1.5in]{geometry}



\theoremstyle{plain}
\newtheorem{theorem}{Theorem}

\newtheorem{prop}[theorem]{Proposition}

\theoremstyle{definition}

\theoremstyle{remark}
\newtheorem*{remark}{Remark}

\DeclareMathOperator{\Id}{Id}

\newcommand{\ud}{\,\mathrm{d}}
\newcommand{\rd}{\mathrm{d}}

\newcommand{\Or}{\mathcal{O}}

\newcommand{\CC}{{\mathbb C}}

\newcommand{\RR}{{\mathbb R}}

\newcommand{\p}{\partial}

\newcommand{\wh}[1]{\widehat{#1}}

\IfFileExists{mathabx.sty}%
  {\DeclareFontFamily{U}{mathx}{\hyphenchar\font45}%
   \DeclareFontShape{U}{mathx}{m}{n}{<->mathx10}{}%
   \DeclareSymbolFont{mathx}{U}{mathx}{m}{n}%
   \DeclareFontSubstitution{U}{mathx}{m}{n}%
   \DeclareMathAccent{\widebar}{0}{mathx}{"73}%
}{%
  \PackageWarning{mathabx}{%
    Package mathabx not available, therefore\MessageBreak substituting
    widebar with overline\MessageBreak }%
  \newcommand{\widebar}[1]{\overline{#1}}%
}
\newcommand{\wb}[1]{\widebar{#1}}

\newcommand{\veps}{\varepsilon}
\newcommand{\eps}{\epsilon}

\newcommand{\abs}[1]{\lvert#1\rvert}
\newcommand{\norm}[1]{\lVert#1\rVert}



\title[Strang Splitting for QLS]{Strang Splitting Methods for a
  quasilinear Schr\"odinger equation - Convergence, Instability and
  Dynamics}

\author[J. Lu]{Jianfeng Lu}
\address{Departments of Mathematics, Physics, and Chemistry, Duke University, Box 90320, Durham, NC 27708, USA}
\email{jianfeng@math.duke.edu}

\author[J.L. Marzuola]{Jeremy L. Marzuola}
\address{Department of Mathematics, UNC-Chapel Hill \\ CB\#3250
  Phillips Hall \\ Chapel Hill, NC 27599, USA}
\email{marzuola@math.unc.edu}

\date{\today}

\begin{document}

\begin{abstract}
  We study the Strang splitting scheme for quasilinear Schr\"odinger
  equations. We establish the convergence of the scheme for solutions
  with small initial data. We analyze the linear instability of the
  numerical scheme, which explains the numerical blow-up of large data
  solutions and connects to the analytical breakdown of regularity of
  solutions to quasilinear Schr\"odinger equations. Numerical tests
  are performed for a modified version of the superfluid thin film
  equation.
\end{abstract}

\subjclass[2010]{65M70; 35Q55}

\keywords{Strang splitting; quasilinear Schr\"odinger equations;
  convergence; stability; blow-up.}

\thanks{The first author was supported in part by the Alfred P. Sloan
  Foundation and by the National Science Foundation under grant number
  DMS-1312659.  He would like to thank Weizhu Bao for helpful
  discussions.  The second author was supported as a guest lecturer at
  Karlsruhe Institute of Technology in Summer 2013 as well as by NSF
  grant DMS-1312874, and wishes to especially thank his collaborators
  Jason Metcalfe and Daniel Tataru for introducing him to quasilinear
  Schr\"odinger theory.  We also wish to thank the anonymous reviewers
  for many helpful comments that help improve the exposition in the
  manuscript.  We thank Ludwig Gauckler also for pointing out an error in the convergence proof in an
  earlier version of the draft.}

\maketitle

\section{Introduction}
\label{sec:Intro}

Consider a general quasilinear Schr\"odinger equation
\begin{equation}\label{eq:qls}
  i u_t = - \Delta u + u f(\abs{u}^2) + u g'(\abs{u}^2) \Delta g(\abs{u}^2),
\end{equation}
for $f, g: \RR \to \RR$.  Such equations can be written as
\begin{equation}
\label{eqn:quasiquad}
\left\{ \begin{array}{l}
i u_t + a^{jk} (u ) \p_j \p_ku = 
F(u,\nabla u) , \quad u:
\RR \times \RR^d \to \CC^m \\[3mm] 
u(0,x) = u_0 (x)
\end{array} \right. 
\end{equation}
with small initial data in a space with relatively low Sobolev
regularity.  Note, quadratic quasilinear interactions can also be handled, but with some extra decay assumptions. Here  
\[
a : \CC^m \times (\CC^m)^d \to
\RR^{d \times d}, \qquad 
F: \CC^m \times (\CC^m)^d \to \CC^m
\]
are smooth functions which we will assume satisfy
\[
a(y,z) = I_d+O(|y|^2+|z|^2), \qquad 
F(y,z)= O(|y|^3+|z|^3) \text{ near } (y,z) = (0,0).  
\]

Quasilinear equations of this form have arisen in several models.  See \cite{Poppenberg} for a thorough list, but we mention here works related to the superfluid thin-film equation \cite{Kurihara}, modeling ultrashort pulse lasers \cite{dBHS,dBHNS}, and time dependent density functional theory \cite{CMR:2004}. The model we will consider here numerically equates to setting $g(s) = f(s) = s$, and hence
\begin{equation}
\label{eqn:model}
  i u_t = - \Delta u + \abs{u}^2 u + u \Delta(\abs{u}^2).
\end{equation}
This is a pseudo-attractive version of the superfluid thin-film equation, which is given by
\begin{equation}
\label{eqn:thinfilm}
  i u_t = - \Delta u + \abs{u}^2 u - u \Delta(\abs{u}^2)
\end{equation}  
and can be seen as a leading order contribution to the ultrashort pulse laser models from \cite{dBHS,dBHNS}.
Existence of solutions to quasilinear equations have been studied analytically in several cases, see \cite{dBHS,dBHNS,KPV,KPRV1,KPRV2,MMT3,MMT4,Poppenberg} and many others. The reason we choose to study \eqref{eqn:model} is that, while similar to \eqref{eqn:thinfilm} in that it is guaranteed to have small data local well-posedness from \cite{MMT4} and hence can be used to verify our numerical convergence results for general quasilinear models, the dynamics of \eqref{eqn:model} can lead to a breakdown of regularity due to a non-positive definite conserved energy.  The model \eqref{eqn:thinfilm} on the other hand has a positive energy quantity and, as a result, much more stable dynamics. 

The nonlinear flow will allow interesting singularities to form in the
evolution for large enough initial data.  In particular, we observe
blow-up at a particular amplitude threshold, but these singularities
are representative of a breakdown of regularity in the higher
derivatives and hence not the standard self-similar style blow-up from
the semilinear Schr\"odinger equation.  Such a threshold was observed
as an obstruction to local well-posedness using Nash-Moser type
arguments in \cite{LPT}.  We show analytically that this mechanism for
instability is inherited by the Strang splitting scheme through a
rigorous convergence result and analysis of a finite frequency
approximation.  Moreover, we observe numerically that this threshold
for ill-posedness arises in several different types of initial
configuration and is rather robust.  However, we note that this
threshold is not the numerically observed sharp threshold for
long-time well-posedness, as indeed the dynamics are able to drive
nearby solutions to this critical amplitude.  These features of
\eqref{eqn:model} will be explored in Section \ref{sec:Num}.
 
Let us consider the nonlinear part of the equation
\begin{equation}\label{eq:nonlinear}
  i v_t = v f(\abs{v}^2) + v g'(\abs{v}^2) \Delta g(\abs{v}^2).
\end{equation}
Taking the complex conjugate, we have 
\begin{equation*}
  - i \wb{v}_t = \wb{v} f(\abs{v}^2) 
  + \wb{v} g'(\abs{v}^2) \Delta g(\abs{v}^2). 
\end{equation*}
We calculate 
\begin{equation}
\label{eqn:nonlinampcon}
  \begin{aligned}
    i \partial_t \abs{v}^2 & = i \wb{v} \partial_t v + i v \partial_t
    \wb{v} \\
    & = \abs{v}^2 f(\abs{v}^2) + \abs{v}^2 g'(\abs{v}^2) \Delta
    g(\abs{v}^2) \\
    & \qquad - \abs{v}^2 f(\abs{v}^2) - \abs{v}^2 g'(\abs{v}^2) \Delta
    g(\abs{v}^2) \\
    & = 0,
  \end{aligned}
\end{equation}
and hence under the evolution \eqref{eq:nonlinear} the amplitude is
conserved. This will be a key property used to develop the numerical
scheme.

To construct a stable numerical scheme, we consider a Strang splitting method for the quasilinear Schr\"odinger equation, which is a composition of the exact flows of the differential equations 
\begin{equation}
  i \partial_t u = - \Delta u 
\end{equation}
and 
\begin{equation}\label{eq:splittingB}
  i \partial_t u = u f(\abs{u}^2) + u g'(\abs{u}^2) \Delta g(\abs{u}^2). 
\end{equation}
More concretely, we approximate $u(t_n)$ with $t_n = n \tau$ for a
step size $\tau > 0$ by $u_n$ via
\begin{align}
  & u_{n+1/2}^- = e^{\frac{i}{2} \tau \Delta} u_n; \notag \\
  & u_{n+1/2}^+ = u_{n+1/2}^- \exp\left(-i \tau
    \bigl(f(\abs{u_{n+1/2}^-}^2) + g'(\abs{u_{n+1/2}^-}^2)
    \Delta g(\abs{u_{n+1/2}^-}^2) \bigr)\right);  \label{eqn:splitting} \\
  & u_{n+1} = e^{\frac{i}{2} \tau \Delta} u_{n+1/2}^+. \notag
\end{align}
We note that the scheme is explicit and symmetric, thanks to the
amplitude preserving property \eqref{eqn:nonlinampcon} of \eqref{eq:nonlinear}. One can use a Fourier pseudo-spectral method for the spatial discretization, and hence the flow $\exp(\tfrac{i}{2} \tau \Delta)$ can be efficiently calculated using the fast Fourier transform (FFT), and the flow \eqref{eq:splittingB} amounts to changing the phase of the solution on 
each mesh point.   {  While the pseudospectral numerical flow turns out to be stable, we note that there is a definite loss of derivatives associated with the nonlinear flow component in the middle step of the continuous Strang Splitting algorithm.  This makes iteration of the approximation a challenge without taking smooth initial conditions.  However, modifying the flow to take into account the pseudo-spectral frequency cut-off will allow us to do frequency cut-off dependent estimates. 
}

\smallskip

Due to the advantage of being structure-preserving, the Strang splitting scheme \cite{Strang:68} and higher order splitting schemes (e.g. \cite{st:1993, Yoshida:90}) have been widely applied to a variety of nonlinear Schr\"odinger equations, mainly semilinear Schr\"odinger equations, modeling monochromatic light in nonlinear optics, Bose-Einstein condensates, as well as envelope solutions for surface wave trains in fluids.  See for example \cite{ABB:13,AppelGross:02, BaoCai:13, BaoJakschMarkowich:03, BaoJinMarkowich:03, BaoMauserStimming:03, BaoShen:05, Chin:07, Faou:09, HardinTappert:73, Jin:12, JMS:11, McLachlanQuispel:02, PathriaMorris:87, PerezGarciaLiu:03,ShenWang:13, WeidemanHerbst:86}. While we focus on the Strang splitting scheme for quasilinear Schr\"odinger equations, let us also mention that many other time discretization approaches to solve non-linear evolution equations have been developed, including Crank-Nicholson type schemes (see e.g. \cite{Sanz-Serna:84} and also \cite{ADKM} and \cite{HMZ1} for applications in studying numerical blow-ups and nonlinear scattering), Magnus expansion approaches (\cite{Magnus:54} and also the recent review article \cite{BCOR:2009}), exponential time-differencing schemes (see e.g. \cite{CoxMatthews:02, kt:2005}), implicit-explicit methods (see e.g. \cite{AscherRuuthWetton:95}),  the comparison study in \cite{TahaAblowitz:84}, and many others.

The convergence of splitting schemes for semilinear Schr\"odinger
equations was analyzed in \cite{DescombesThalhammer:13, Gauckler:11,
  JahnkeLubich:00, Lubich:08, ShenWang:13, Thalhammer:08}. In the
present work, we extend the previous works to the quasilinear
Schr\"odinger equation. { We will prove the convergence of the
time-splitting method to the original evolution for the superfluid
thin-film equation by first proving convergence to a mollified flow, then using convergence of the mollified flow to full continous quasi-linear problem. The analysis follows the ideas in the seminal
contribution by Lubich in \cite{Lubich:08}, where the main tools are
the calculus of Lie derivatives.  However, we cannot really treat even our mollified problem as fully semi-linear since the mollified equation estimates as a semi-linear problem introduce losses that depend upon the choice of frequency cut-off that we wish to avoid for the sakes of uniform estimates.} We will emphasize on the regularity
of the time flow, for which the behavior of the quasilinear
Schr\"odinger equation is different from the semilinear ones.  This is
a Lie theoretic approach to the continuous time approximation and
Sobolev-based well-posedness results of the second author with
J. Metcalfe and D. Tataru in order to model small initial data
solutions of finite time intervals \cite{MMT3, MMT4}.  The scheme is
symplectic and is stable within a range of parameters, motivated by
the analysis in \cite{MMT4}, where the analysis is done purely in
Sobolev spaces $H^s$ for $s$ sufficiently large.  In addition, the
Strang splitting method converges in the order $\tau^2$ for time step
$\tau$.

Moreover, we are able to extend a linear instability observed in a quasilinear Schr\"odinger equation in \cite{LPT} to the numerical scheme used to approximate it, justifying the accuracy of a numerically observed blow-up.  We study the dynamics of this blow-up solution using both Gaussian and plane-wave configurations of initial data to observe that the threshold for instability is not the sharp global well-posedness threshold for the equation and can indeed be reached through frequency dynamics on lower-amplitude solutions.  This is not the standard blow-up through re-scaling of a nonlinear state, but is a frequency instability of sorts that causes high frequencies to grow exponentially in a method akin to a backwards heat map.

\smallskip

The result is laid out as follows.  We begin with a numerical study of
the modified superfluid film equation in $1D$ \eqref{eqn:model} using
the Strang splitting scheme \eqref{eqn:splitting}.   {  To analyze the
convergence of the scheme, we discuss the mollification argument and prove the convergence of the mollified numerical
scheme to the mollified flow for small data in Section \ref{sec:Lie} by using Lie theoretic
results and necessary multilinear estimates. } To understand the blow-up
behavior observed for large enough data, we analyze the stability and
instability of the scheme in Section~\ref{sec:Stab}.  In Section
\ref{sec:Reg}, we discuss the regularity of the time flow of the
quasilinear Schr\"odinger equation and the time-splitting scheme.  In \cite{LPT} an $L^\infty$ threshold for local well-posedness was observed through use of Fr\'echet derivatives in a Nash-Moser scheme.  We will show this similarly arises in analysis about exact plane-wave solutions on the torus using analysis similar to that of  \cite{WeidemanHerbst:86}.
In order to establish the differentiability of the numerical solution with respect to time to sufficiently high accuracy, we rely on bounds in a much stronger topology in space.  {  Finally, we tie together the numerical scheme and full quasilinear flow by addressing the convergence of mollified quasilinear equations to the full quasilinear flow in Section \ref{sec:mollify}.}

\section{Numerical Results}
\label{sec:Num}

To test the Strang splitting scheme \eqref{eqn:splitting} for
quasilinear Schr\"odinger equations and numerically study the
regularity breakdown, we consider the modified superfluid thin-film
equation (see \eqref{eqn:model} and also \cite{Kurihara,Poppenberg})
given by
\begin{equation}\label{eq:superfluid}
  i u_t + u_{xx} = |u|^2 u + (\abs{u}^2)_{xx} u 
\end{equation}
on the domain $(-\pi, \pi]$ with periodic boundary condition. We have
done similar computations for the ultrashort pulse laser equation as
described in \cite{dBHS,dBHNS}, but no further interesting features of
the numerical analysis arose, so we do not present them here for
clarity of exposition.

\subsection{Symmetric Gaussian initial condition}
Let us consider initial conditions given by
\begin{equation}\label{eq:initgauss}
  u(0, x) = a e^{- x^2 / (2 \sigma^2)},
\end{equation}
on the domain $(-\pi, \pi]$ with periodic boundary condition. Here
$\sigma$ is the width and $a$ is the amplitude of the Gaussian
profile.

We calculate the solution up to time $T = \pi/4$ with parameters $a =
1/5$ and $\sigma = 1/5$ for the initial condition.  A Fourier
pseudo-spectral method with $N = 256$ spatial grid points is used. To
verify the second order accuracy of the time-splitting scheme, we
choose different numbers of time steps and estimate the error by
comparing the numerical solutions to a solution with $N_t = 10^5$. The
results in table~\ref{tab:2ndorder} and figure~\ref{fig:2ndorder}
confirm the second order convergence.
\begin{table}[htp]
  \begin{tabular}{ccc}
    $N_t$ & $L^2$-norm & $H^1$-seminorm \\
    \hline
    $500$ & $1.6973e-06$ & $4.7783e-04$ \\
    $1000$ & $4.2241e-07$ & $1.1878e-04$ \\
    $2000$ & $1.0545e-07$ & $2.9642e-05$ \\
    $4000$ & $2.6323e-08$ & $7.3990e-06$ \\
    $8000$ & $6.5487e-09$ & $1.8407e-06$ 
  \end{tabular}
  \smallskip
  \caption{Numerical error of the time-splitting scheme for initial 
    data \eqref{eq:initgauss} with $a = 1/5$ and $\sigma = 1/5$ at 
    $T = \pi/4$.}   
  \label{tab:2ndorder}
\end{table}

\begin{figure}
  \centering
  \includegraphics[width = 3in]{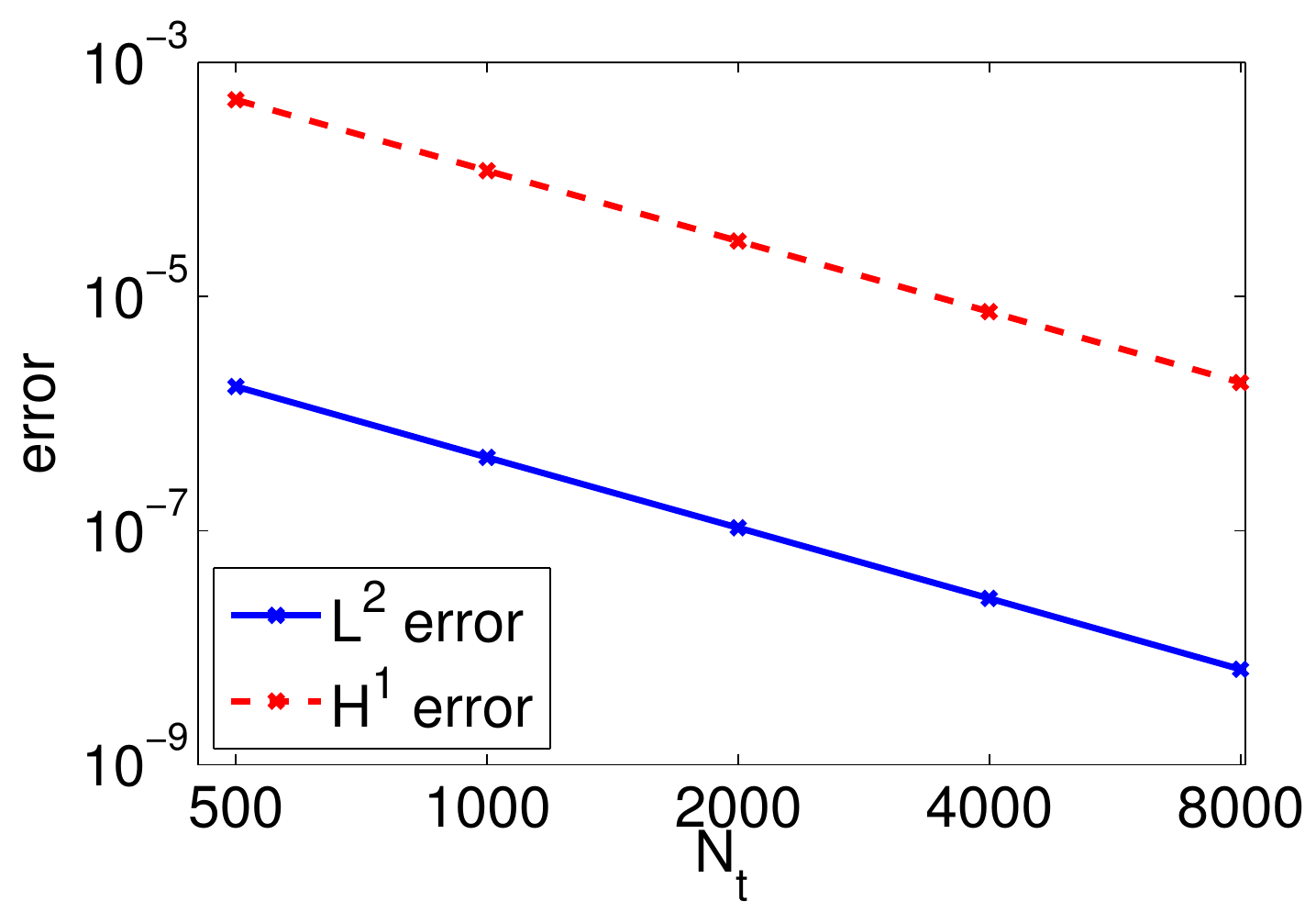}
  \caption{Log-log plot of the numerical error in
    table~\ref{tab:2ndorder} measured in $L^2$ norm and $H^1$
    seminorm.}\label{fig:2ndorder}
\end{figure}

Provided the solution remains in $H^1$, the PDE \eqref{eq:superfluid} conserves mass and energy given by 
\begin{align}
\label{eqn:mass}
  & M(u) = \int \abs{u}^2 \ud x, \\
 \label{eqn:energy}
  & E(u) = \frac{1}{2} \int \abs{u_x}^2 \ud x + \frac{1}{4} \int \abs{u}^4
  \ud x - \frac{1}{4} \int \abs{ (\abs{u}^2)_x}^2 \ud x.
\end{align} 
The numerical scheme conserves the mass conservation law.  While there
is no energy conservation law \cite[table $1$]{ABB:13}, 
the energy is observed to remain numerically conserved with tiny deviation, as
shown in figure~\ref{fig:conserv}.

\begin{figure}[ht]
  \centering
  \includegraphics[width = 3in]{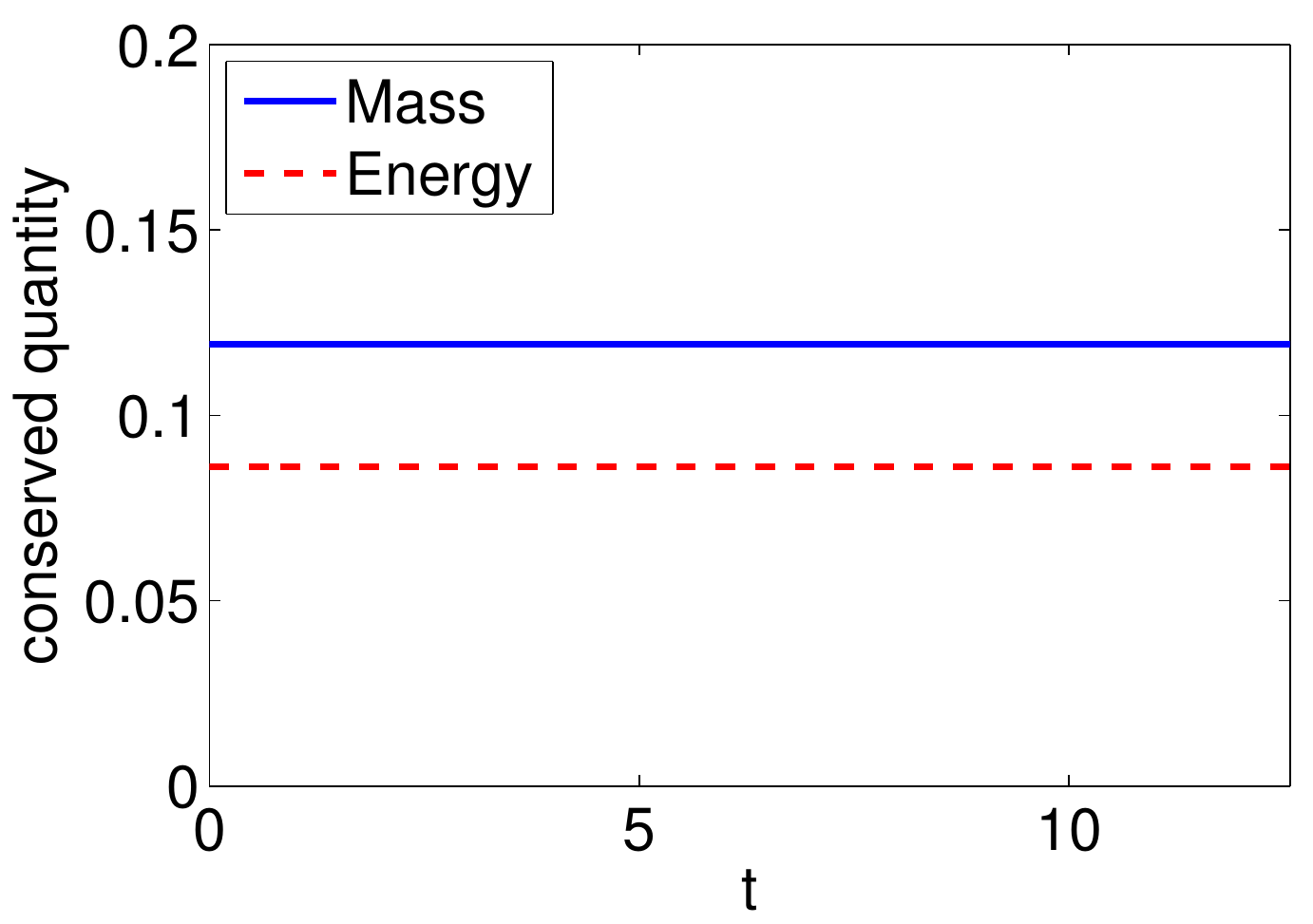}
  \caption{Mass and energy conservation for the numerical solution.
    The initial condition is given by \eqref{eq:initgauss} with $a =
    1/5$ and $\sigma = 1.5$. The numerical solution is calculated up
    to time $T = 4 \pi$ and with $N_t = 32000$ time steps. Energy and
    mass are recorded every $100$ time steps.}\label{fig:conserv}
\end{figure}

Due to the nonlinearity of the equation \eqref{eq:superfluid}, the
problem becomes more stiff for initial conditions with larger amplitude
or derivatives. For the family of Gaussian initial data
\eqref{eq:initgauss}, this means to increase $a$ or reduce $\sigma$. We
next consider the example with $a = 0.625$, $\sigma = 1/10$ and $T =
\pi/4$. The problem is considerably more difficult than the previous
choice of parameters. We refine the spatial discretization to $N =
512$ to resolve the oscillatory profile of the solution. The numerical
error can be found in table~\ref{tab:nearblowup}. We still observe
second order accuracy, though in this case, the time step size
cannot be too large, otherwise the numerical scheme becomes unstable.

\begin{table}[htp]
  \begin{tabular}{ccc}
    $N_t$ & $L^2$-norm & $H^1$-seminorm \\
    \hline
    $10000$ & unstable \\
    $20000$ & $3.5573e-04$ & $1.2775e-01$ \\
    $40000$ & $3.0928e-04$ & $5.1125e-02$ \\
    $80000$ & $8.2591e-05$ & $1.5832e-02$ \\
    $160000$ & $1.9494e-05$ & $4.1212e-03$ \\
    $320000$ & $4.4489e-06$ & $9.6345e-04$
  \end{tabular}
  \smallskip
  \caption{Numerical error of the time-splitting scheme for initial 
    data \eqref{eq:initgauss} with $a = 0.625$ and $\sigma = 1/10$ at 
    $T = \pi/4$. The error is estimated by comparing the numerical 
    solution with $N_t = 10^6$.}   
  \label{tab:nearblowup}
\end{table}

We remark that to make the scheme more stable, it is possible to apply
Fourier spectrum truncation to eliminate spurious Fourier components
of the numerical solution, as introduced in~\cite{Krasny:86}. At each time step, we set to zero all Fourier
coefficients with amplitude below a certain threshold $\delta$ times
the maximum amplitude of Fourier coefficients. In practice, for this
example, we find the threshold $\delta = 1e-3$ makes the scheme stable
with $N_t = 2000$ (recall that the solution is not stable for $N_t =
10000$ without Fourier truncation). On the other hand, the filtering
might introduce inconsistency to the numerical results.

\begin{figure}[htb]
  \centering
  \includegraphics[width = 3in]{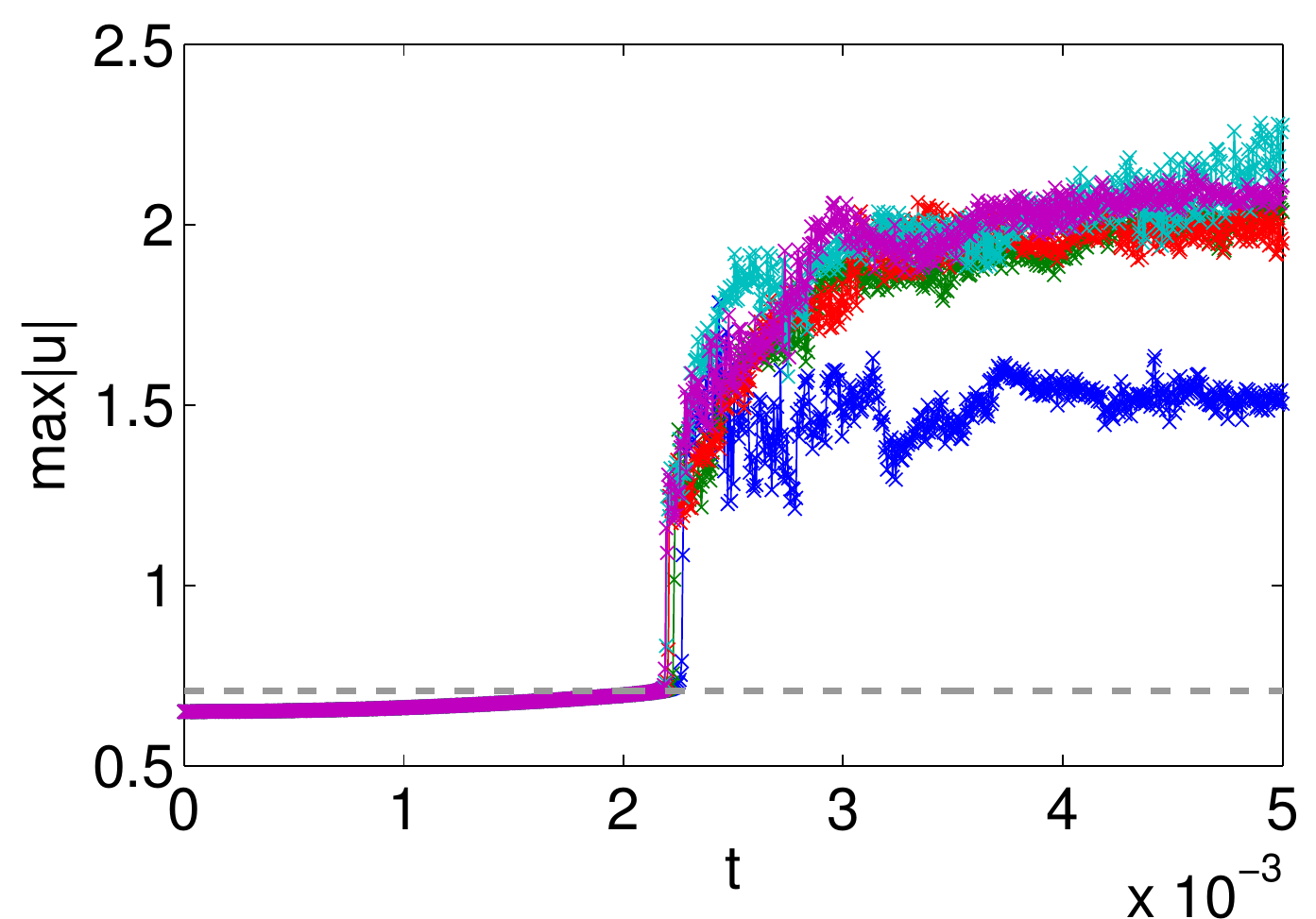} 
  \includegraphics[width = 3in]{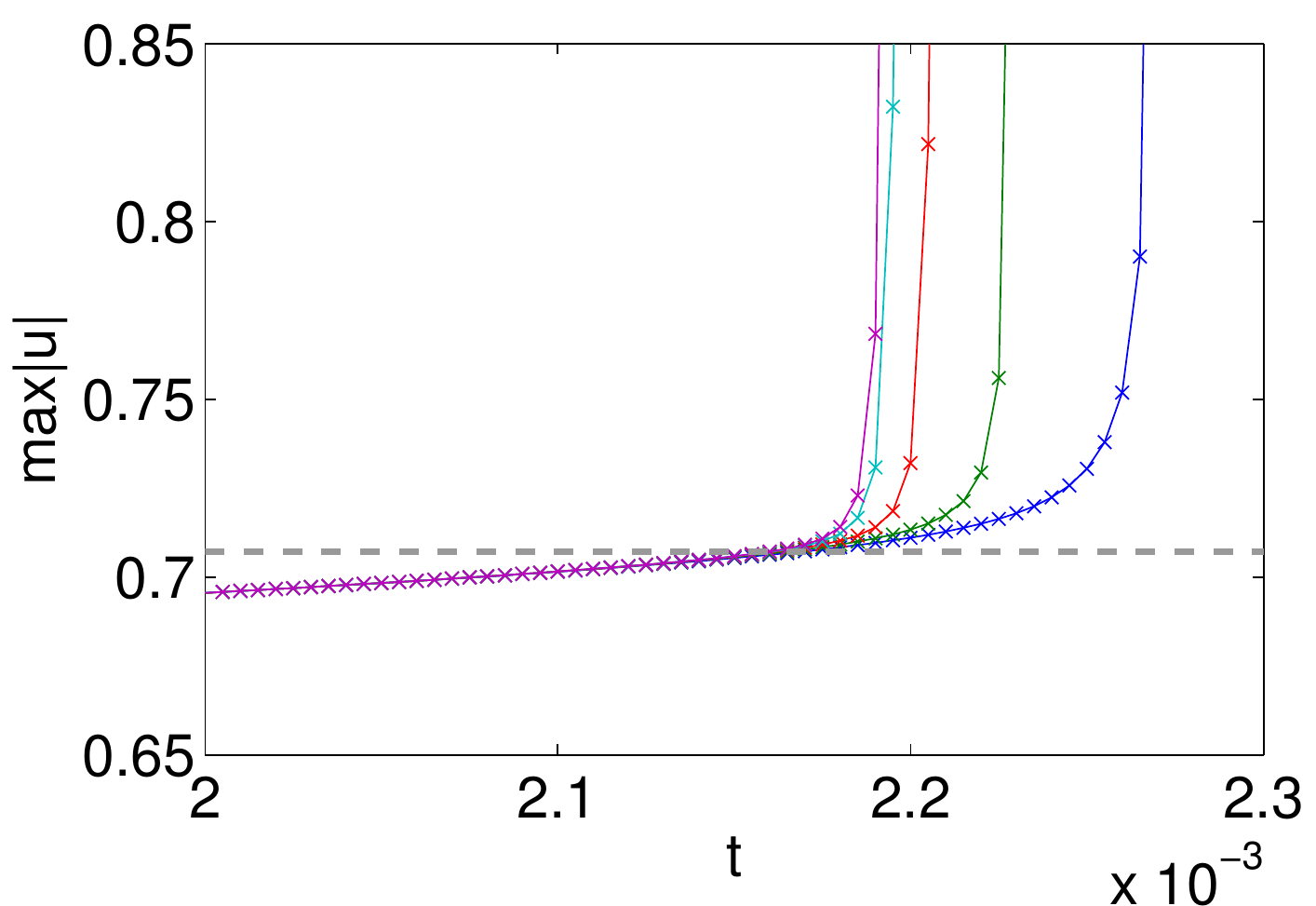}
  \caption{$\max \abs{u(\cdot, t)}$ as a function of time for Gaussian
    initial condition with $a = 0.65$ and $\sigma = 1/10$. Five time
    step sizes are taken, correspond to $N_t = 10000, 20000, 40000,
    80000$, and $160000$ (blue, green, red, cyan, and purple curves)
    for total simulation time $T = \pi$. The bottom panel zooms in the
    region near the numerical blow-up. The dashed horizontal line
    indicates the level $\sqrt{2}/2$.}
  \label{fig:umax_blowup}
\end{figure}

If we further increase the amplitude of the initial condition, the
numerical results indicate a ``blow-up'' behavior for the PDE. We
increase the amplitude to $a = 0.65$ while keeping $\sigma = 1/10$.
The numerical solution is calculated up to $T = 5 \times
10^{-3}$. figure~\ref{fig:umax_blowup} shows $\max \abs{u(\cdot, t)}$
as a function of $t$ for different choices of time step sizes. The
sudden jump and exponential increase of the magnitude of maximum
around $t = 2.18 \times 10^{-3}$ indicates a numerical ``blow-up'' of
the solution. Note that the onset point of the behavior does not
depend on the choice of time step size, indicating that this is not
due to numerical instability of the time integration.  Here we have
chosen $N = 4096$ spatial grid points. The blow-up behavior persists
for further refinement of the spatial discretization.

To investigate more closely the above observed ``blow-up'', we study
the solution around $x = 0$ and the time when the ``blow-up'' occurs.
We plot the absolute value of the solution in
figure~\ref{fig:u_blowup}. The numerical simulation indicates that the
solution develops a ``focusing peak'' at $x = 0$ with amplitude close
to $\sqrt{2}/2$.

\begin{figure}[htp]
  \centering
  \includegraphics[width = 3in]{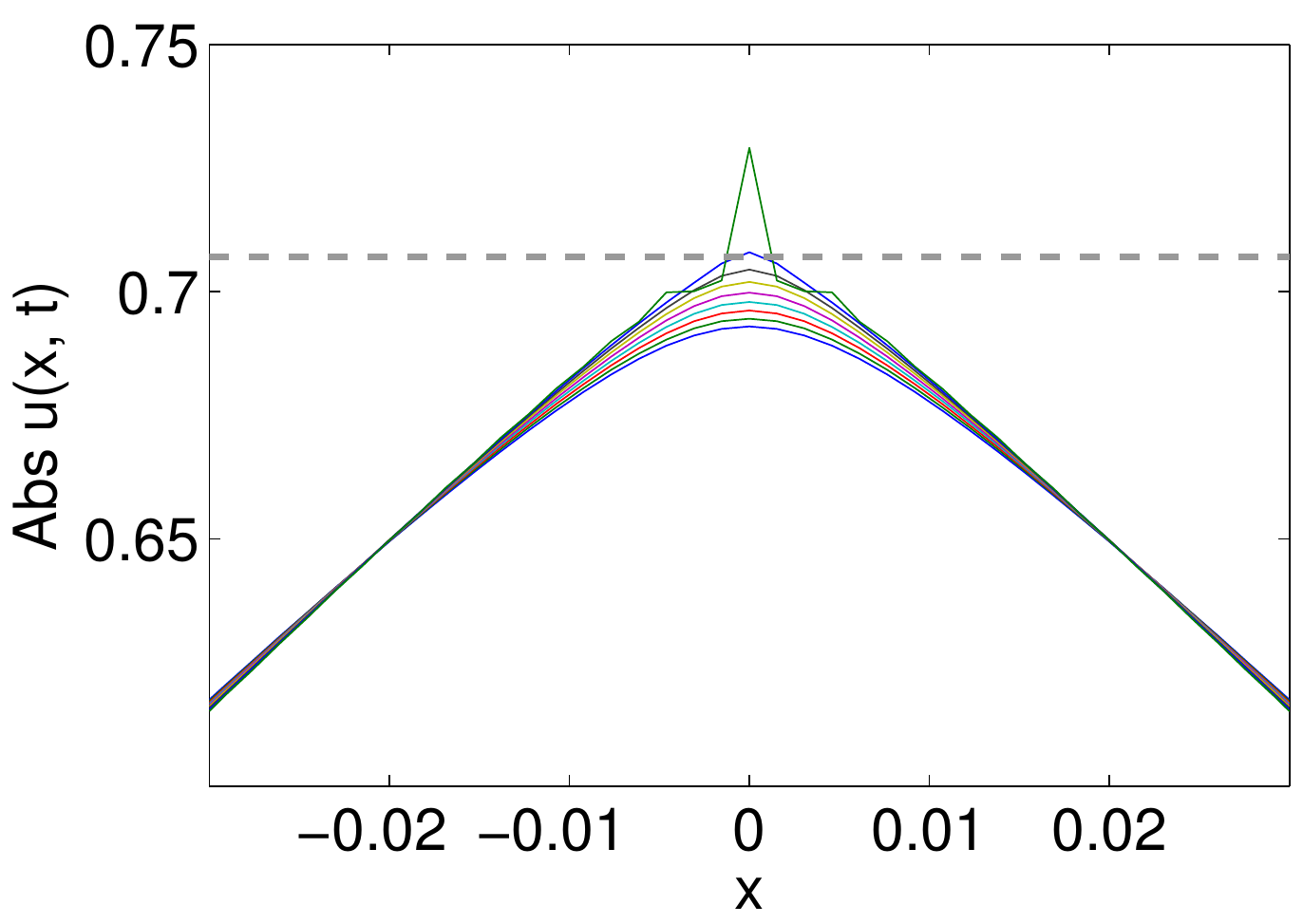}
  \caption{Snapshots of the absolute value of the numerical solutions around
    the numerical blow-up. The solution squeezes as time increases and
    leads to a blow-up. The reference horizontal line is plotted at
    the value $\sqrt{2}/2$.}
  \label{fig:u_blowup}
\end{figure}

The numerical results suggest that the solution to the PDE becomes
unstable for this family of initial conditions when the amplitude
reaches around $\sqrt{2}/2$. To further confirm this, we compare the
results for the initial condition with $a = 0.625$ and $\sigma =
1/10$, the solution stays below the amplitude of $\sqrt{2}/2$ as in
figure~\ref{fig:noblowup}. Numerically, no blow-up is observed for $a
= 0.625$. The instability for large time step size is caused by
pollution in the Fourier spectrum, but not the intrinsic instability
of solutions to the PDE.  

\begin{figure}[htp]
  \centering
  \includegraphics[width = 3in]{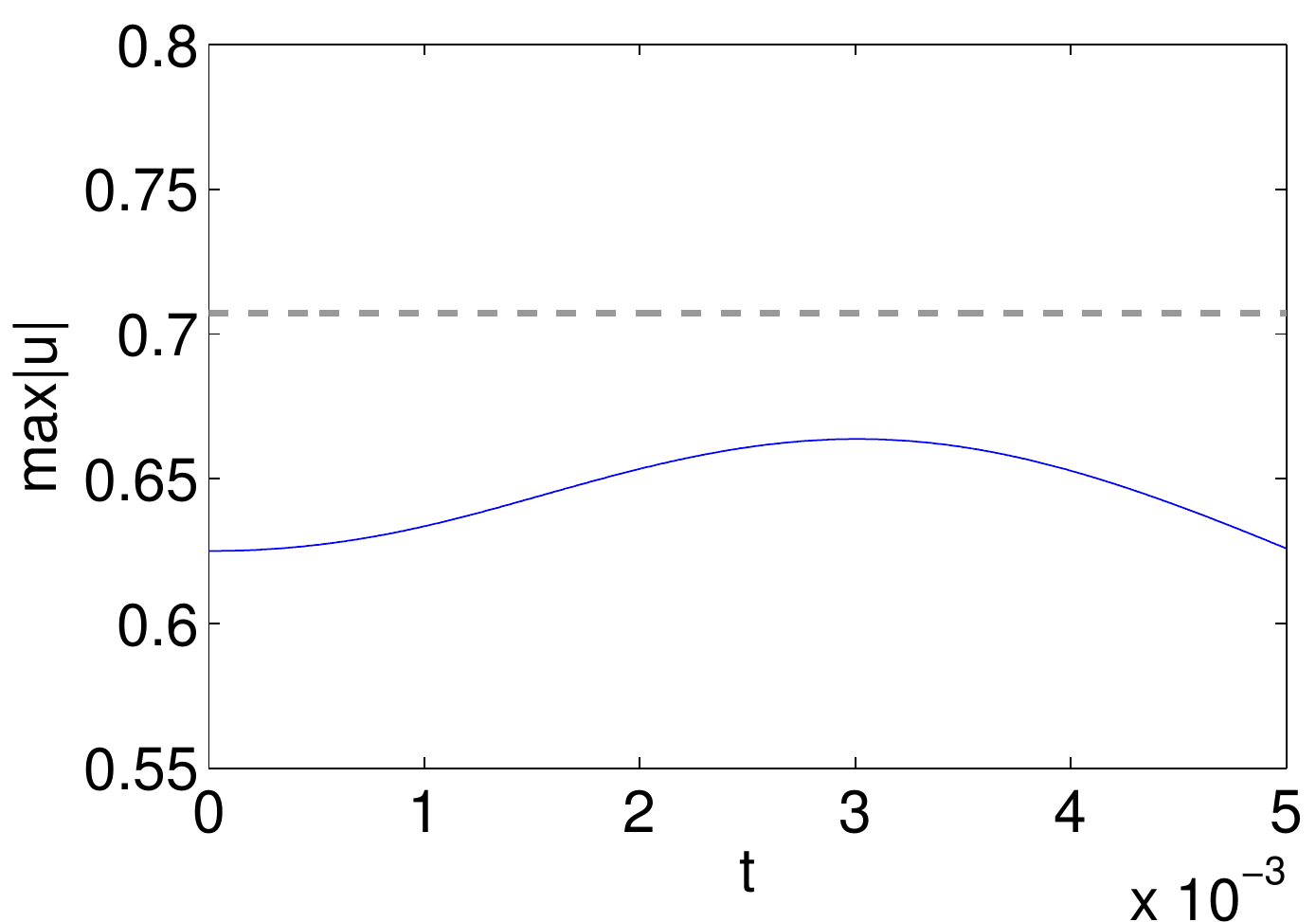}
  \caption{The maximum magnitude of $u$ as a function in time for
    initial condition with $a = 0.625$ and $ \sigma = 1/10$. Compare
    with the quite different behavior in figure~\ref{fig:umax_blowup}.}
  \label{fig:noblowup}
\end{figure}

\subsection{Plane-wave initial conditions}
\label{s:pw}

The only exact solution we are aware of for the superfluid equation
\eqref{eq:superfluid} is the family of wave trains:
\begin{equation}\label{eq:planewave}
  u(x, t) = a \exp i (kx - \omega t).
\end{equation}
This is a solution to \eqref{eq:superfluid} provided that
\begin{equation}
  \omega = k^2 + \abs{a}^2.  
\end{equation}
Since $\abs{u(x, t)} = \abs{a}$ for the solution \eqref{eq:planewave}
at any $x$ and $t$, the splitting error of the Strang splitting scheme
vanishes, as the potential commutes with the $\Delta$ operator.

We study the instability by adjusting the amplitude $a$ of the initial
data $u(x, 0) = a \exp (i kx)$ of the solution \eqref{eq:planewave}.
figure~\ref{fig:planewave} shows the simulation results for two
solutions with initial conditions given by \eqref{eq:planewave} with
$a = \sqrt{2}/2 - 10^{-8}$ and $a = \sqrt{2}/2 + 10^{-8}$,
respectively. Even though the amplitudes of the two solutions only
differ by $2\times 10^{-8}$, the behavior of the numerical solutions
are completely different. While the numerical solution for the former
is stable and accurate, the local truncation error kicks off
instability in the latter case. This indicates again that $\sqrt{2}/2$
is the threshold of instability.

\begin{figure}[htp]
  \includegraphics[width = 3in]{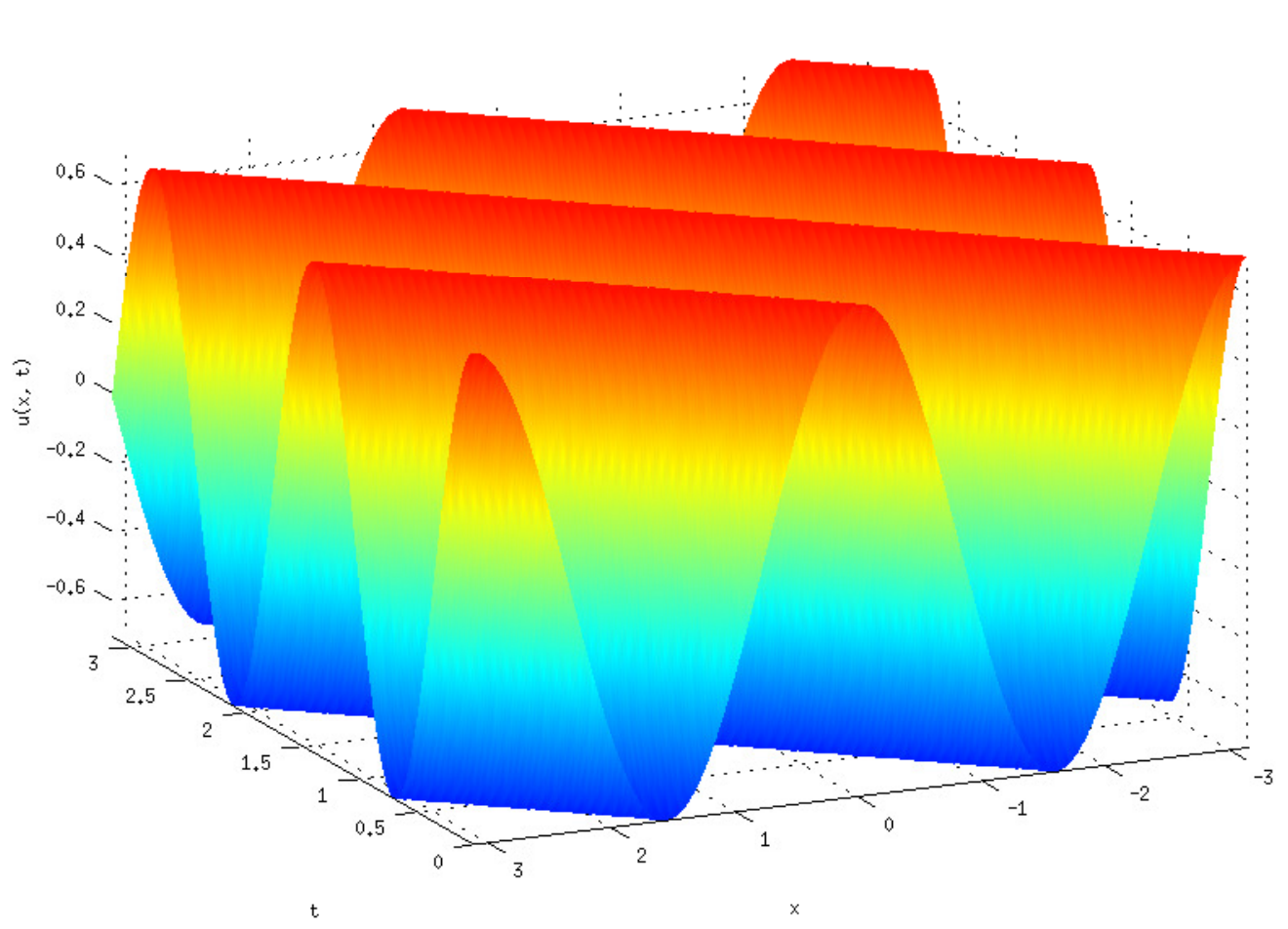}
  \includegraphics[width = 3in]{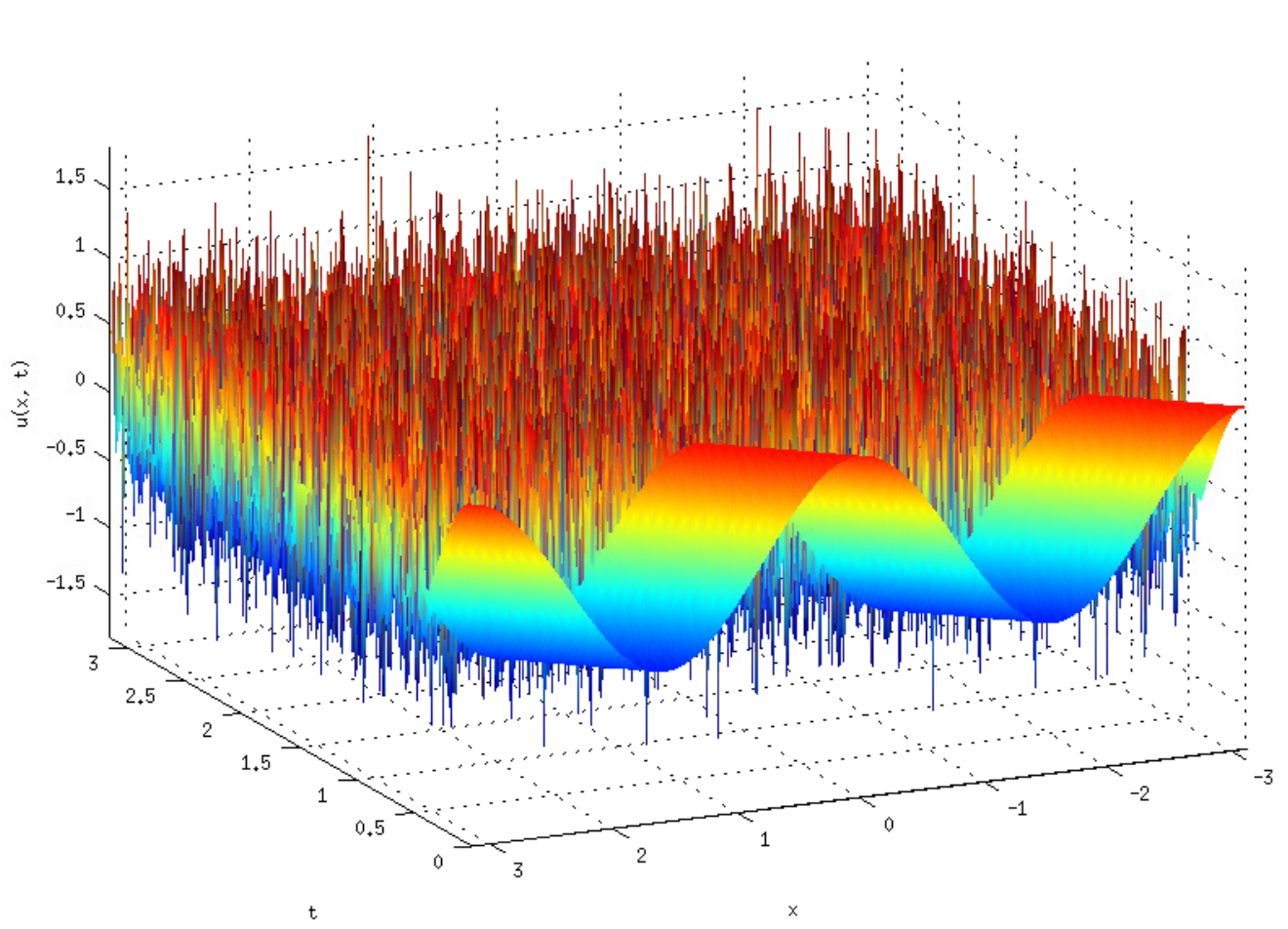}
  \caption{Numerical solution for \eqref{eq:planewave} with $a =
    \sqrt{2}/2 - 10^{-8}$ and $a = \sqrt{2}/2 + 10^{-8}$,
    respectively. The solution to the PDE is unstable when the
    amplitude is larger than $\sqrt{2}/2$.}
  \label{fig:planewave}
\end{figure}

We also study multiple Fourier mode solutions to observe if non-local interactions can vary the blow-up profile.  Hence, given a pseudospectral discretization scheme keeping the first $N$ Fourier modes, we take initial data of the form
\begin{equation}\label{eq:multiplanewave}
  u(x, 0) = a \sum_{j = 1} \exp i k_j x ,
\end{equation}
for $0 \leq k_1 \leq \dots \leq k_j \ll N$. The blow-up behavior of these  solutions become more complicated; in particular, oscillations begin to factor around the blow-up after an initial exponential growth of the maximum amplitude (see figure~\ref{fig:multmodes}). However, it seems that generically $\sqrt{2}/2$ is still a threshold for blow-up.
\begin{figure}[ht!]
  \centering
  \includegraphics[width = 2in]{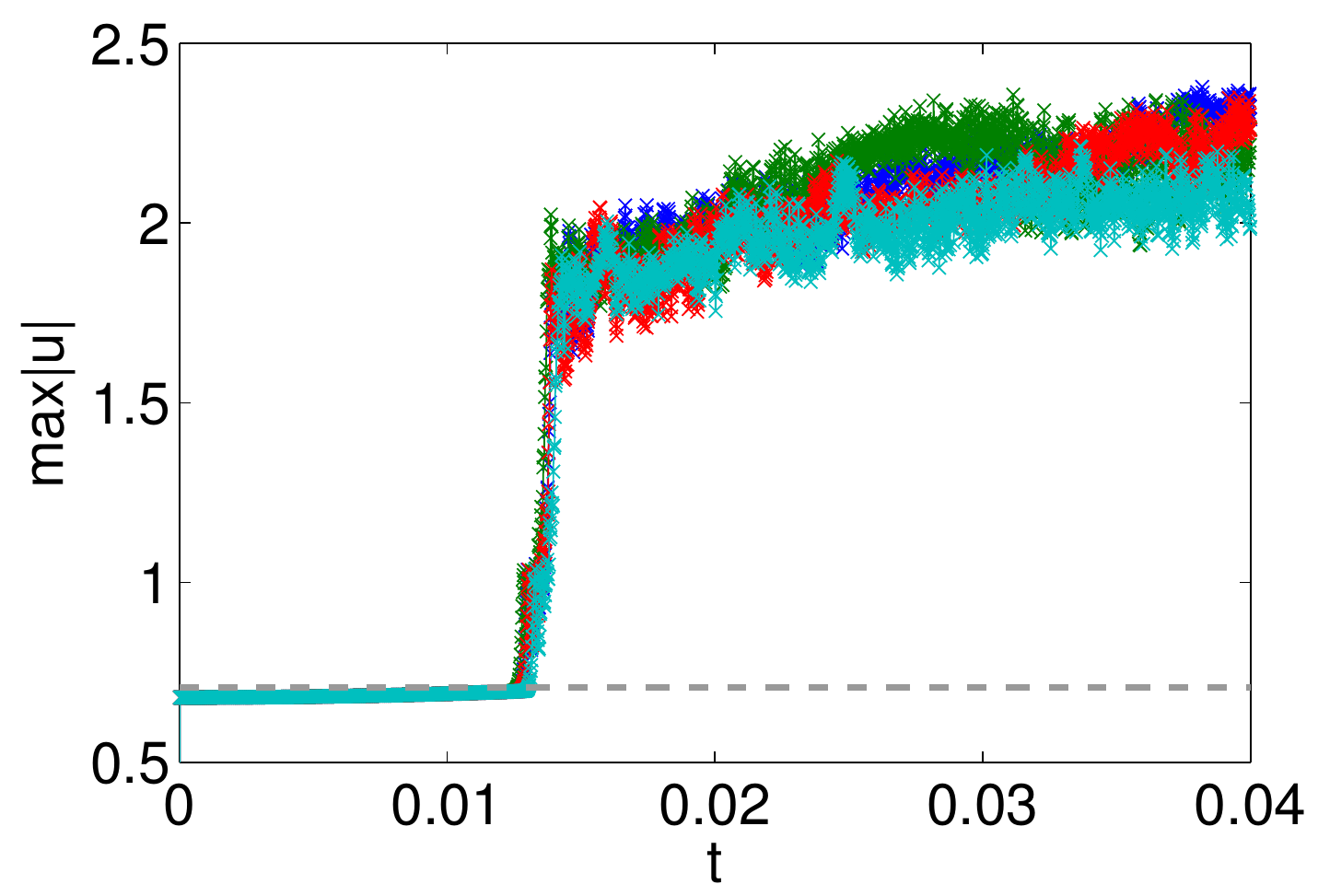}  
  \includegraphics[width = 2in]{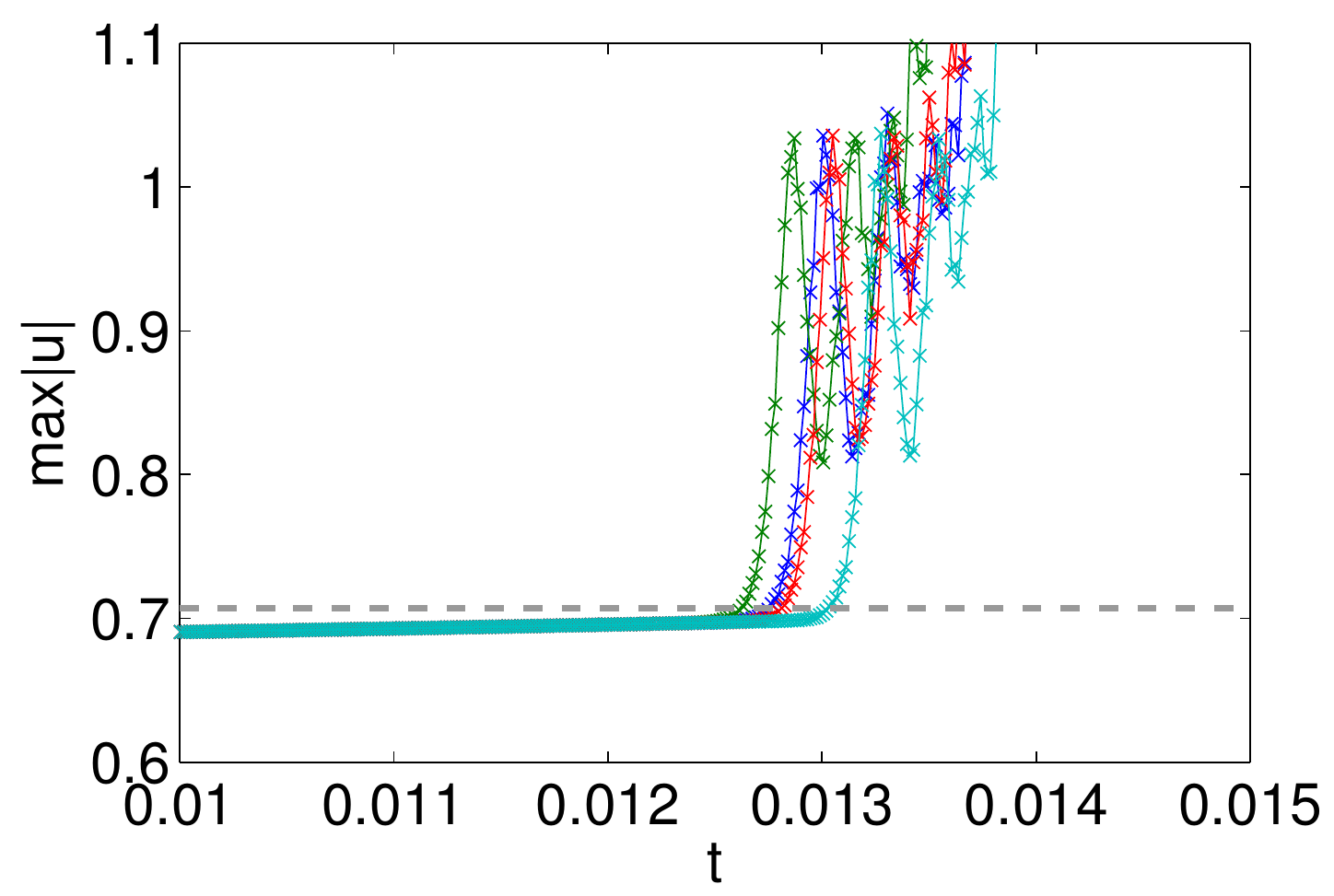}  \\
  \includegraphics[width = 2in]{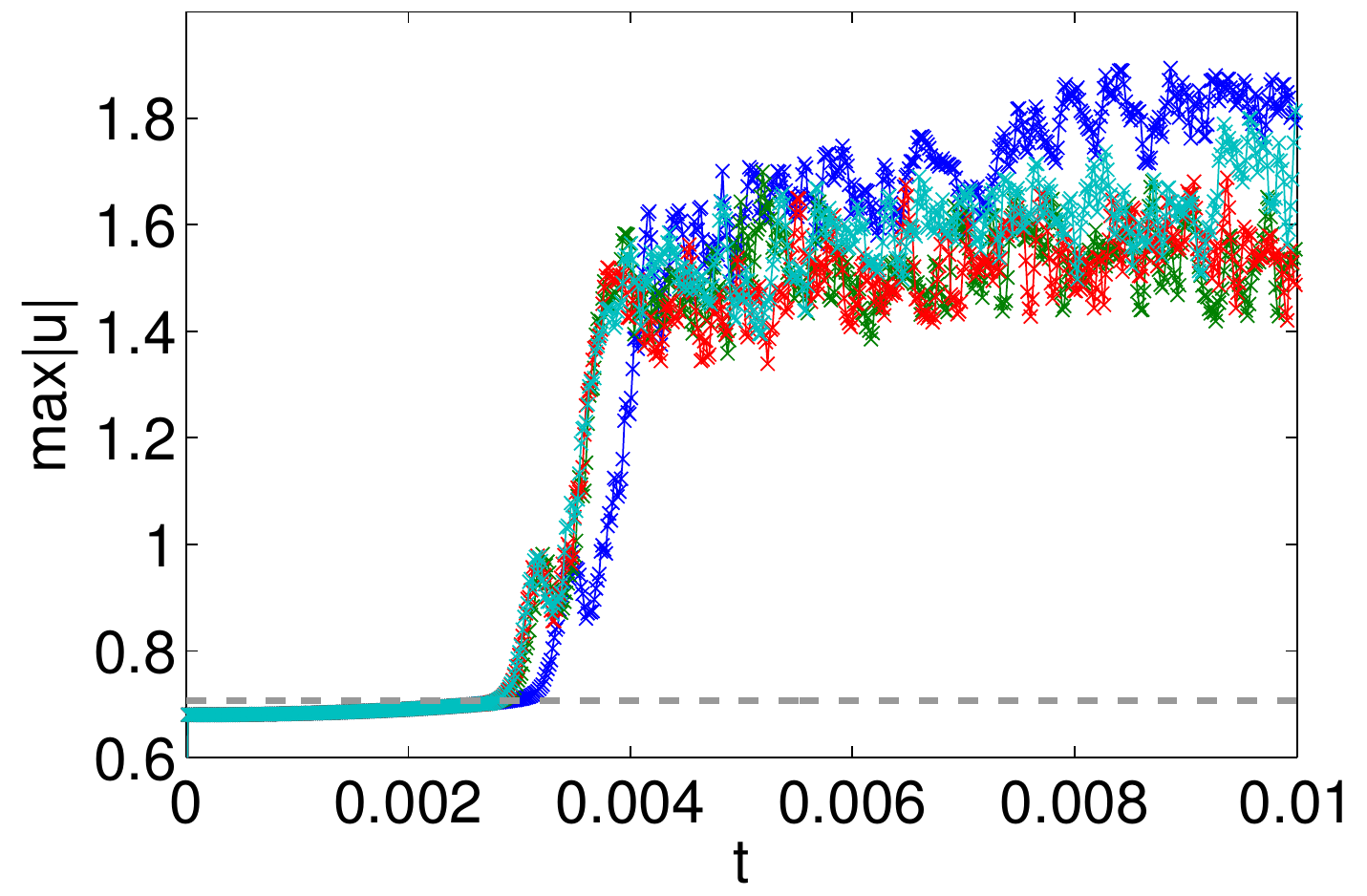}
  \includegraphics[width = 2in]{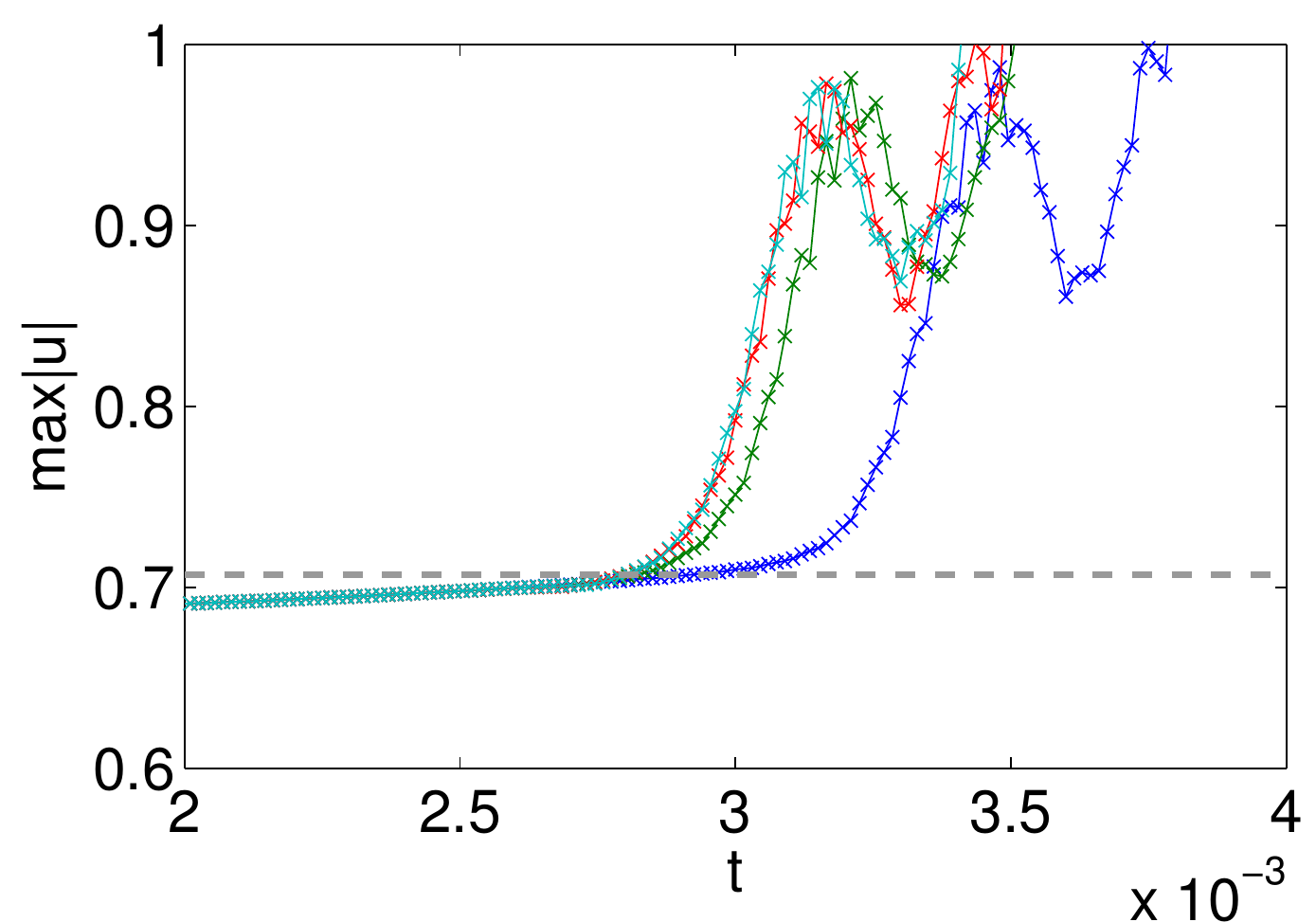}
  \caption{$\max \abs{u(\cdot, t)}$ as a function of time for multiple
    Fourier mode initial condition with $a = 0.65$ and frequencies
    $2,8$ in the $2$-mode setting (top), and $2,8,14,20$ in the
    $4$-mode setting (bottom). Four time step sizes are taken,
    correspond to $N_t = 10000, 20000, 40000$, and $80000$ (blue,
    green, red, and cyan curves) for total simulation time $T =
    0.15$. The right panel zooms in the region near the numerical
    blow-up.  The dashed horizontal line indicates the level
    $\sqrt{2}/2$.}
  \label{fig:multmodes}
\end{figure}

\section{Convergence of the pseudo-spectral time-splitting scheme}
\label{sec:Lie}

{ In this section, we prove the convergence of a modified
  Strang splitting scheme to closely match the pseudospectral scheme
  used numerically for small data initial conditions. We suppose that
  the solution $u(t)$ to the modified superfluid thin film equation
  \eqref{eq:superfluid} in $1d$ is in $H^7$ for $0 \leq t \leq T$, and
  wish to compare to a Strang splitting flow defined such that an
  implicit frequency cut-off occurs at each stage of computation with
  the quasilinear nonlinearity.  We show that the pseudo-spectral
  Strang splitting is well approximated by a mollified superfluid thin
  film equation.  As the analysis is somewhat dissimilar to that
  presented in this section related to the numerical algorithm, later
  in Section \ref{sec:mollify}, we will compare the evolution of the
  mollified superfluid thin film equation to \eqref{eq:superfluid}.

\begin{theorem}
  \label{thm:conv} The numerical solution $u_{\eps, n}$ given by the Strang
  splitting scheme with frequency cut-off $|k| \leq \epsilon^{-1}$
  (defined below in
  \eqref{eq:strangmollified}) 
  with time step size $\tau > 0$ provided $\tau < C \eps^{5/2 + \eta}$
    for some $\eta > 0$ on an interval of size $T = K \tau$ for some large $K$,
has a second-order error bound in
  $H^1$
\begin{equation}
  \norm{u_{\eps, n} - u(t_0 + \tau n)}_{H^1} \leq C(m_{7}, T) ( \tau^2 + \epsilon ) , 
\end{equation}
where 
\begin{equation*}
  m_N = \max_{0 \leq t \leq T} \norm{u(t)}_{H^N}. 
\end{equation*}
\end{theorem}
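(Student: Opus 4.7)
The plan is to decompose the error by inserting an intermediate object $\tilde u_\eps(t)$ that solves the mollified quasilinear equation, in which each appearance of the nonlinearity is pre-composed with the frequency projector $J_\eps$ onto $|k|\leq \eps^{-1}$. Writing
\[
u_{\eps, n} - u(t_n) = \bigl(u_{\eps, n} - \tilde u_\eps(t_n)\bigr) + \bigl(\tilde u_\eps(t_n) - u(t_n)\bigr),
\]
I would target $\tau^2$ for the first (splitting) piece and $\eps$ for the second (mollification) piece. The mollification error is the content of Section~\ref{sec:mollify}, so the heart of the argument is the first piece, and the rest of the plan concerns that piece.

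For the splitting error against the mollified flow, I would follow the Lie-derivative framework of \cite{Lubich:08}. Let $A$ be the Lie derivative generating the free flow $e^{i\tau\Delta/2}$ and let $B_\eps$ be the one generating the mollified nonlinear step \eqref{eq:splittingB}. A Baker--Campbell--Hausdorff expansion yields a one-step (local) error of the form
\[
\tau^3 \bigl( c_1 [A, [A, B_\eps]] + c_2 [B_\eps, [A, B_\eps]] \bigr) \varphi + O(\tau^5)
\]
evaluated along the trajectory $\varphi$. The substantive technical step is to bound each double commutator in $H^1$ by $C(m_{7})\,\eps^{-\alpha}$ for a suitable $\alpha$, using the Bernstein-type inequality $\norm{J_\eps f}_{H^{s+\beta}} \lesssim \eps^{-\beta}\norm{f}_{H^s}$ to absorb the derivatives produced when $\Delta$ is commuted past the quasilinear factor $u\,g'(|u|^2)\Delta g(|u|^2)$. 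A careful count shows that at most four derivatives must be absorbed by $|u|^2$ in the worst commutator (hence the $H^{7}$ hypothesis) and that at most $\eps^{-5/2-\eta}$ loss is needed.

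A Lady Windermere's fan argument then propagates the local error: the global error is bounded by $n$ times the local error, provided the scheme is stable in the sense that a perturbation at step $k$ remains controlled at step $k+1$. Stability follows from the fact that $B_\eps$ is Lipschitz on bounded sets of $H^1$ with constant $O(\eps^{-2})$, so variation of constants gives a per-step growth factor $1+C\tau \eps^{-2}$, which remains bounded on $[0,T]$ as long as $\tau \eps^{-2}\lesssim 1$. Summing over $n = T/\tau$ steps produces a total splitting error of order $T\tau^{2}\eps^{-5/2-\eta}$, which reduces to $O(\tau^2)$ under the stated condition $\tau < C\eps^{5/2+\eta}$. Combined with the $\eps$ bound from Section~\ref{sec:mollify}, this yields the claimed $\tau^2 + \eps$ estimate.

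The main obstacle is the commutator bookkeeping: the outermost $\Delta$ in $[A,[A,B_\eps]]$ differentiates a product that already contains $\Delta(|u|^2)$, so one must commute several Laplacians past the quasilinear factor while carefully tracking which pieces are protected by $J_\eps$ and which inherit derivatives from $A$. I would organize this via multilinear paraproduct-type decompositions, separating each factor into low- and high-frequency pieces and exploiting that only the $J_\eps$-projected factors absorb a Bernstein loss. Matching the emerging power of $\eps^{-1}$ to the CFL-type condition $\tau < C\eps^{5/2+\eta}$ is the delicate accounting step on which the rest of the argument rests.
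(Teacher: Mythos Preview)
Your overall architecture---triangle inequality through the mollified flow $\tilde u_\eps$, then Lubich's Lie-derivative machinery for the splitting error and the result of Section~\ref{sec:mollify} for the mollification error---matches the paper exactly. The gap is in where you place the $\eps$-losses and how you discharge them.

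In the paper the commutator bounds carry \emph{no} $\eps$-loss: one simply proves
\[
\norm{[\hat T,\hat V](\psi)}_{H^1}\le C\norm{\psi}_{H^5}^3,\qquad
\norm{[\hat T,[\hat T,\hat V]](\psi)}_{H^1}\le C\norm{\psi}_{H^7}^3,
\]
and applies these along the exact (or mollified) trajectory, which is bounded in $H^7$ by hypothesis. There is no Bernstein step in the local error; the $H^7$ budget absorbs the extra derivatives directly, and this is precisely why the constant is $C(m_7)$. The condition $\tau<C\eps^{5/2+\eta}$ enters only through \emph{stability}: the Lipschitz estimate for the mollified nonlinear step in $H^s$ reads, after one inverse inequality,
\[
\norm{\mu(t)-\nu(t)}_{H^s}\le\norm{\psi-\phi}_{H^s}+C_1\, t\,\eps^{-(d+4)/2-\eta}\norm{\psi-\phi}_{H^s}+C_2\int_0^t\norm{\mu-\nu}_{H^s},
\]
and the hypothesis $t\le\tau\le\eps^{(d+4)/2+\eta}$ collapses the middle term to $C_1\norm{\psi-\phi}_{H^s}$, so Gronwall gives a per-step factor $e^{C_0\tau}$ with $C_0$ \emph{independent of $\eps$}. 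Lady Windermere then yields a clean $C(m_7,T)\tau^2$.

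Your allocation fails at two points. First, a per-step stability factor $1+C\tau\eps^{-2}$ compounds over $T/\tau$ steps to $\exp(CT\eps^{-2})$, which is unbounded as $\eps\to 0$; the condition $\tau\eps^{-2}\lesssim 1$ controls a single step, not the product. Second, even granting a bounded stability constant, a local error $\tau^3\eps^{-5/2-\eta}$ sums to $T\tau^2\eps^{-5/2-\eta}$, and under $\tau<C\eps^{5/2+\eta}$ one only has $\eps^{-5/2-\eta}<C\tau^{-1}$, hence $\tau^2\eps^{-5/2-\eta}<C\tau$---first order, not second. The remedy is exactly the paper's: spend the $H^7$ regularity on the commutators so the local error is $\eps$-free, and reserve the $\tau$-versus-$\eps$ hypothesis solely to neutralize the inverse-inequality loss inside the stability Gronwall.
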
}

\begin{remark}
  The small data local existence in $H^s$ for the cubic quasilinear nonlinear
  terms is established in Marzuola-Metcalfe-Tataru \cite{MMT3,MMT4}
  (See also the works of Poppenberg \cite{Poppenberg},
  Kenig-Ponce-Vega and Kenig-Ponce-Rolvung-Vega
  \cite{KPV2,KPV3,KPV,KPRV1,KPRV2}).  In particular, in the case of
  equation \eqref{eqn:model} in dimension $d$, there exists a local in
  time solution in $H^\sigma$ as long as $\sigma > \frac{d+5}{2}$
  provided $u_0$ is sufficiently small. Therefore, the regularity
  assumption $m_7 < \infty$ holds for sufficiently small data in $H^\sigma$ for $\sigma$ sufficiently large (and hence in $L^\infty$).  This is far from sharp however and much work must be done to explore the threshold between well-posedness and blow-up.  
\end{remark}

\begin{remark}
The interplay between $\tau$ and $\epsilon$ parameters arise only from the order $\epsilon$ bounds on the $H^1$ remainder of cutting the initial data off to frequencies below $\epsilon^{-1}$ and evolving with a mollified Schr\"odinger flow.  This will fully be addressed in Section \ref{sec:mollify}, while below we compute the dependence with respect to $\tau$.  Indeed, the authors observed numerically that instability occurs when $\tau$ is taken too large.  
\end{remark}


{We will actually prove the result for arbitrary spatial dimension $d$, since the ideas are the same for any dimension.  To start, we wish to establish the stability of the Strang splitting scheme with respect to a fixed time step.  Before we begin let us take
\begin{equation}
m_k =  {\| u(t) \|_{L^{\infty}H^k}}, \ k \leq \max \bigl(7,\frac{d+6}{2}+{\eta}\bigr),
\end{equation}
for any $\eta > 0$ such that $u$ the solution to \eqref{eq:qls} with small initial data can be defined in $H^k$ using \cite{MMT3,MMT4}.  }

We can approximate the solution through the continuous time generators of the split step equations:
\begin{align}
\label{eqn:Deltadot}
i \dot \psi & = -\Delta \psi \\
\label{eqn:Vdot}
i \dot \psi & = V[ \psi] \psi
\end{align}
where
\begin{align}\label{eqn:sftfq}
V[\psi] = |\psi|^2 + \Delta (|\psi|^2).
\end{align}
The generators of the split step method can thus be described as exponential maps of the vector fields given by 
\begin{align}
\hat{T} (\psi) & = i \Delta \psi, \label{eqn:hatT} \\
\hat{V} (\psi) & = -i V[\psi] \psi = -i [ |\psi|^2 + \Delta (|\psi|^2)]  \psi.  \label{eqn:hatV} 
\end{align}

The key estimate we will require is of the type 
\begin{equation}
\label{eqn:tri2}
\| \Delta (u v) w \|_{H^s} \leq  C  \| u \|_{H^{s+\frac{4+d}{2}+{\eta}}} \| v \|_{H^{s+\frac{4+d}{2}+{\eta}}} \| w \|_{H^s}
\end{equation}
for $s > 0$
using the $L^\infty \times L^\infty \times L^2 \to L^2$ H\"older's inequality and the Sobolev embedding for $L^\infty$.  Note, the total loss of regularity on a given component of the multilinear estimates can be reduced with other estimates such as
\begin{equation}
\label{eqn:tri1}
\| \Delta (u v) w \|_{H^s} \leq C \| u \|_{H^{s+\frac{6+d}{3}}} \| v \|_{H^{s+\frac{6+d}{3}}} \| w \|_{H^{s+ \frac{d}{3}}}
\end{equation} 
for $s \geq 0$ using $L^6 \times L^6 \times L^6 \to L^2$ H\"older's inequality and the Sobolev embedding for $L^6$ as well as moving to $L^p$ based spaces and applying the techniques of Strichartz estimates, etc.  {However, we will use  \eqref{eqn:tri2} throughout since in the mollification the loss in $\epsilon$ will be scale invariant for any choice, plus it turns out to be beneficial to have one component remain free of derivatives to take advantage of the short time gains we will observe in the Lie Theory.}

Before computing Lie derivatives, we want to understand the stability of the evolution generated by $\hat{V}$.  To do this, we study
\begin{equation}
\label{eqn:Vode}
i \dot \nu = V[ \psi ] \nu, \ \ \nu (0) = \psi.
\end{equation}
For $\psi$ sufficiently regular, it is possible to show that the evolution varies continuously with the choice of initial data in a weak topology.  In particular, given
\begin{align*}
i \dot \nu = V[ \psi ] \nu, \ \ \nu (0) = \psi, \\
i \dot \mu = V[ \phi ] \mu, \ \ \mu (0) = \phi, 
\end{align*}
by looking at the difference of these two evolutions, expanding $V[\phi] \mu - V[\psi] \nu = (V[\phi] - V[\psi]) \mu - V[\psi] (\nu - \mu)$ and applying \eqref{eqn:tri2} we have
\begin{equation}
\label{eqn:Liecont}
\| \mu (t) - \nu(t) \|_{H^s} \leq \| \psi - \phi \|_{H^s} + C_1 t \| \psi - \phi \|_{H^{s+\frac{d+4}{2}+{\eta}}} + C_2 \int_0^t   \| \mu (s) - \nu (s) \|_{H^s},
\end{equation}
for any ${\eta} >  0$ and $s > \frac{d+4}{2}$ chosen sufficiently large to control the evolution, where $C_1$, $C_2$ both depend upon $M_s = \max_{0\leq t \leq T} \{ \| \phi \|_{H^s}, \| \psi \|_{H^s} \}$.   As a result, a Gronwall type argument shows
\begin{equation}
\label{eqn:gronwall1}
\| \mu (t) - \nu(t) \|_{H^s} \leq e^{C_0 \tau} \| \psi - \phi \|_{H^{s+\frac{d+5}{2}}}
\end{equation}
where $C_0$ depends upon $m_k$.

{Unfortunately, the above estimate comes with a regularity loss
  (it requires higher Sobolev on the previous time step). As a result,
  it is not sufficient to provide an error estimate unless we assume
  that the solution is smooth.  \footnote{{The authors thank
      Ludwig Gauckler for pointing out this which leads an error in an
      earlier version of the argument.}} This can be dealt with for
  certain types of derivative nonlinearities that are not quasilinear
  however, see for instance \cite{HLR:2013}.  See also \cite{AmSi:14} for a recent analytic treatment of convergence of mollified derivative Schr\"odinger equations on the torus.}

{To resolve the issue, let us introduce a mollified equation for $u_\epsilon = G_\epsilon u_\epsilon$ given by
\begin{equation}\label{eq:mollflow}
  i u_{\eps, t} + u_{\eps, xx} = G_{\eps} \ast [\abs{u_{\eps}}^2 u_{\eps}] 
  + G_{\eps} \ast [\Delta (\abs{u_{\eps}}^2) u_{\eps}], 
\end{equation}
where $G_{\eps}$ is a smooth, compactly supported mollifier that cuts off the high frequency terms of the evolution such that
if $u \in H^s$, we have 
\begin{align}
  & \norm{G_{\eps} u - u}_{H^{s-1}} \leq C \eps \norm{u}_{H^s}, \quad \text{and} \\
  & \norm{G_{\eps} u}_{H^{s+1}} \leq C \eps^{-1} \norm{G_{\eps} u}_{H^s}.
\end{align}
Note $G_{\eps}$ is essentially a smooth cut-off in frequency space at the frequency $\Or(1/\eps)$. 
}

{As will be shown in Section~\ref{sec:mollify}, the mollified flow
  and the original flow are close for small initial data using the frequency envelope type arguments of \cite{MMT3,MMT4} that prove the high frequency terms remain small over the order $1$ lifespan. If
  $\norm{u_0}_{H^{s+3}}$ is small, then
  \begin{equation}
    \norm{u_{\eps}(t) - u(t)}_{L^{\infty}H^s} \leq C \eps.
  \end{equation}
  Hence, we will analyze the Strang splitting on the mollified flow
  \eqref{eq:mollflow}. From a point of view of the fully discretized
  flow, e.g., with pseudospectral method, the inclusion of a frequency
  cut-off is also quite natural. Actually, the numerical result in
  Section~\ref{sec:Num} can be understood as discretizations of
  \eqref{eq:mollflow} since the number of Fourier modes $N$ is fixed
  as the time step is reduced, and the numerical solution is compared
  to that with a tiny time step (but fixed spatial resolution).}

{For the mollified flow, the Strang splitting scheme converges
  with second order error.
  \begin{prop} Consider the numerical solutions $u_{\eps, n}$ given by the
    Strang splitting scheme on the mollified equation: 
    \begin{equation}\label{eq:strangmollified}
      \begin{aligned}
        & u_{\eps, n+1/2}^- = e^{\frac{i}{2} \tau \Delta} u_{\eps, n}; \\
        & u_{\eps, n+1/2}^+ = u_{\eps, n+1/2}^- \exp\bigl( - i \tau G_{\eps} (\abs{u_{\eps, n+1/2}^-}^2 + \Delta \abs{u_{\eps, n+1/2}^-}^2 )\bigr)  \\
        & u_{\eps, n+1} = e^{\frac{i}{2} \tau \Delta} ( G_\epsilon u_{\eps, n+1/2}^+). \\
      \end{aligned}
    \end{equation}
    The numerical solution converges to the solution to the mollified
    equation as $\tau \to 0$, provided $\tau < C \eps^{5/2 + \eta}$
    for some $\eta > 0$
    \begin{equation}
      \norm{u_{\eps, n} - u_{\eps}(t_0 + \tau n)}_{H^1} \leq C(m_7, T) \tau^2. 
    \end{equation}
\end{prop}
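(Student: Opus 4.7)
The plan is to follow the Lie-theoretic framework of Lubich \cite{Lubich:08}, writing the Strang step as $\Psi^\tau = e^{(\tau/2)\hat T}\circ e^{\tau \hat V_\epsilon}\circ e^{(\tau/2)\hat T}$, where $\hat V_\epsilon(\psi) = -iG_\epsilon\bigl(V[G_\epsilon\psi]\psi\bigr)$ incorporates the mollifier throughout, and comparing it to the mollified exact flow $\Phi^\tau$. I would assemble the result via the standard split into a local truncation bound and a discrete stability bound, glued by a Lady Windermere's fan telescoping
\begin{equation*}
u_{\epsilon,n} - u_\epsilon(t_n) = \sum_{j=0}^{n-1}(\Psi^\tau)^{n-1-j}\bigl(\Psi^\tau - \Phi^\tau\bigr)\Phi^{j\tau}u_\epsilon(0).
\end{equation*}

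\textbf{Local consistency.} First I would expand both $\Phi^\tau$ and $\Psi^\tau$ in iterated Duhamel/BCH series. The symmetric structure of Strang splitting cancels the $O(\tau^2)$ terms, leaving an $O(\tau^3)$ remainder built from the iterated Lie brackets $[\hat T,[\hat T,\hat V_\epsilon]]$ and $[\hat V_\epsilon,[\hat V_\epsilon,\hat T]]$ evaluated on the exact mollified trajectory. These brackets produce a few additional spatial derivatives on the nonlinear piece beyond what $\hat V$ already contains; using the trilinear estimate \eqref{eqn:tri2}, the mollifier bound $\|G_\epsilon\|_{H^s\to H^{s+1}}\leq C\epsilon^{-1}$, and the regularity hypothesis $m_7 < \infty$, each such bracket term is controlled in $H^1$ by $C(m_7)\tau^3\epsilon^{-k}$ for a finite $k$. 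The CFL-type assumption $\tau < C\epsilon^{5/2+\eta}$ is precisely calibrated so that $\tau^3\epsilon^{-k}$ becomes $\tau^3$ times an $\epsilon$-independent constant.

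\textbf{Stability.} For the stability of $\Psi^\tau$ in $H^1$, the free propagator is unitary, and the nonlinear middle step $\psi\mapsto\psi\exp\bigl(-i\tau G_\epsilon V[\psi]\bigr)$ preserves pointwise amplitude, so $L^2$ is conserved exactly and the phase factor is of unit modulus. Differentiating the phase once produces $\tau\nabla G_\epsilon V[\psi]$, bounded in $L^\infty$ by $C(m_7)\tau\epsilon^{-\alpha}$ after Sobolev embedding and the mollifier property; under the standing hypothesis $\tau\epsilon^{-5/2-\eta}\leq 1$ this yields a $(1+C(m_7)\tau)$-Lipschitz bound on $\Psi^\tau$ in $H^1$. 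Telescoping the $n \leq T/\tau$ local errors through this stable discrete flow gives
\begin{equation*}
\|u_{\epsilon,n}-u_\epsilon(t_n)\|_{H^1}\leq n\cdot C(m_7)\tau^3 \cdot e^{C(m_7)T}\leq C(T,m_7)\tau^2.
\end{equation*}

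The main obstacle I anticipate is the careful bookkeeping between the $\epsilon^{-1}$ losses from the mollifier and the smallness of $\tau$: every spatial derivative produced by a Lie bracket either lands on a $G_\epsilon$-smoothed factor (paying $\epsilon^{-1}$) or on the rough-but-regular exact trajectory (free, up to the $m_7$ bound). One has to distribute the derivatives in $[\hat T,[\hat T,\hat V_\epsilon]]$ using the sharpest form of \eqref{eqn:tri2} to avoid overcounting, and verify that $5/2+\eta$ is exactly the exponent at which the worst surviving product $\tau^a\epsilon^{-b}$ is absorbed into $\tau^2$ times an $\epsilon$-free constant. Everything else, in particular the symmetry-driven cancellation of $O(\tau^2)$ Lie brackets, is standard once the mollified vector field $\hat V_\epsilon$ is verified to be smooth on the relevant Sobolev scale.
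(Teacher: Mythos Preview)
Your overall architecture—Lubich's Lie-derivative framework, Lady Windermere's fan, local truncation plus stability—is exactly what the paper does. But you have misplaced the role of the CFL hypothesis $\tau < C\eps^{5/2+\eta}$, and this creates a real gap.

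\textbf{Local error.} You claim the iterated brackets $[\hat T,[\hat T,\hat V_\eps]]$ are controlled by $C(m_7)\tau^3\eps^{-k}$ and that the CFL condition ``is precisely calibrated so that $\tau^3\eps^{-k}$ becomes $\tau^3$ times an $\eps$-independent constant.'' This last assertion is simply false: from $\tau\le C\eps^{5/2+\eta}$ you get $\eps^{-k}\le C\tau^{-k/(5/2+\eta)}$, so $\tau^3\eps^{-k}\le C\tau^{3-k/(5/2+\eta)}$, which is strictly worse than $\tau^3$ for any $k>0$; after summing $T/\tau$ steps you would end up with $\tau^{2-k/(5/2+\eta)}$, not $\tau^2$. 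The resolution is that no $\eps^{-k}$ appears in the local error at all. The commutators are evaluated along the \emph{exact} mollified trajectory $u_\eps$, which by hypothesis sits in $H^7$; the mollifier $G_\eps$ only decreases Sobolev norms, so the paper's bounds $\|[\hat T,\hat V](\psi)\|_{H^1}\le C\|\psi\|_{H^5}^3$ and $\|[\hat T,[\hat T,\hat V]](\psi)\|_{H^1}\le C\|\psi\|_{H^7}^3$ hold uniformly in $\eps$, giving $C(m_7)\tau^3$ directly.

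\textbf{Stability.} This is where the CFL condition is actually spent. When you compare the nonlinear substep starting from two nearby data $\psi,\phi$ in $H^s$, the difference $V_\eps[\psi]-V_\eps[\phi]$ costs $\frac{d+4}{2}+\eta$ extra derivatives on $\psi-\phi$ (via \eqref{eqn:tri2}); since the perturbation $\psi-\phi$ is only controlled in $H^s$, not $H^{s+(d+4)/2+\eta}$, this is a genuine loss. The mollifier's inverse inequality converts that loss into a factor $\eps^{-(d+4)/2-\eta}$, and the CFL condition $\tau\le \eps^{(d+4)/2+\eta}$ then makes the offending term in the Gronwall estimate of size $O(1)\|\psi-\phi\|_{H^s}$ rather than divergent. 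This is the mechanism in the paper (see the displayed Gronwall bound $\|\mu(t)-\nu(t)\|_{H^s}\le e^{C_0 t}\|\psi-\phi\|_{H^s}$), and your stability paragraph gestures at it, but you should recognize that it is the \emph{only} place the CFL enters.

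In short: keep your skeleton, but drop the spurious $\eps^{-k}$ from the local error and move the CFL condition entirely into the stability estimate, where the inverse inequality for $G_\eps$ is what makes the regularity-losing Gronwall close in a fixed $H^s$.
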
}

{
\begin{remark}
In the mollified Strang splitting algorithm above, it is possible that there is small loss of $L^2$ norm conservation in the splitting scheme due to cutting off at high-frequency in the third step of the method.  However, we note that in the pseudo-spectral method, the evaluation of the product of the nonlinear phase and $ u_{\eps, n+1/2}^-$ in the middle step is done completely on the spatial side, which already includes essentially a cut-off below a given frequency scale related to the grid spacing.  Hence, in the fully discrete implementation, the $L^2$ norm is actually conserved.
\end{remark}

\begin{remark}
  Theorem~\ref{thm:conv} follows from the above and
  Proposition~\ref{prop:mollify} in Section~\ref{sec:mollify}.
\end{remark}

\begin{proof}
  The convergence proof follows a Lie theoretic idea of Lubich
  \cite{Lubich:08} for semilinear nonlinear Schr\"odinger equations.  

  Denote
  \begin{equation*}
    V_{\eps}[\psi] = G_{\eps} \ast [ \abs{\psi}^2 \psi ] + G_{\eps} \ast [ \Delta(\abs{\psi}^2) \psi]
  \end{equation*}
  and consider two flows given by 
  \begin{align*}
    & i \dot{\nu} = V_{\eps}[\psi] \nu, \ \ \nu(0) = \psi; \\
    & i \dot{\mu} = V_{\eps}[\psi] \mu, \ \ \mu(0) = \phi.
  \end{align*}
  Using essentially the same calculation leading to
  \eqref{eqn:Liecont}, we arrive at
  \begin{equation}
    \begin{aligned}
      \norm{\mu(t) - \nu(t)}_{H^s} & \leq \norm{\psi - \phi}_{H^s} +
      C_1 t \norm{G_{\eps} \ast (\psi - \phi)}_{H^{s+\frac{d+4}{2} +
          \eta}} + C_2 \int_0^t \norm{\mu(s) - \nu(s)}_{H^s} \\
      & \leq \norm{\psi - \phi}_{H^s} + C_1 t \eps^{-(d+4)/2 - \eta}
      \norm{\psi - \phi}_{H^s}  + C_2 \int_0^t \norm{\mu(s) -
      \nu(s)}_{H^s}. 
    \end{aligned}
  \end{equation}
  Note that in the last step, we have used an inverse inequality
  thanks to the frequency cut-off in $G_{\eps}$.  Therefore, as far as
  $t \leq \eps^{(d+4)/2 + \eta}$, a Gronwall type argument gives
  \begin{equation}
    \norm{\mu(t) - \nu(t)}_{H^s} \leq e^{C_0 t} \norm{\psi - \phi}_{H^s}
  \end{equation}
  where $C_0$ depends on $m_k$ for $k = s + \frac{d}2 + 2 + \eta$.

Now, to compare the full evolution to the mollified split-step method, we must compute the Lie commutators between generating vector fields.  As mollification will only reduce norms below, for simplicity, we work with continuous versions of the Strang splitting flow.  However, the reading should keep in mind using $V_\epsilon$ in place of $V$ below.  We observe
\begin{align}
[ \hat{T}, \hat{V} ] \psi & =  \Delta \left( |\psi|^2 \psi - \Delta (|\psi|^2) \psi \right) \notag \\
& \hspace{.2cm} - \left[ 2 \Delta \psi ( \bar{\psi} \psi - \psi^2 \overline{\Delta \psi} )\right]  \label{eqn:Lie} \\
& \hspace{.4cm} - \left[  \Delta ( \Delta \psi \bar{\psi}) \psi - \Delta (\psi \overline{\Delta \psi} ) \psi + \Delta (|\psi|^2 ) \Delta \psi \right].  \notag
\end{align}
Hence,
\begin{equation}
\label{eqn:commutbd1}
\| [\hat{T}, \hat{V} ] (\psi) \|_{H^1} \leq C \| \psi \|_{H^{\max(5,\frac{4+d}{2}+)}}^3.
\end{equation}
In addition, we then can easily compute
\begin{align}
\label{eqn:commutbd2}
\| [ \hat T, [ \hat{T}, \hat{V} ] ] (\psi) \|_{H^1} \leq C \| \psi \|_{H^{\max(7,\frac{6+d}{2}+)}}^3.
\end{align}
Setting the vector field $\hat H = \hat T + \hat V$, the underlying idea is that the evolution of the full quasilinear Schr\"odinger equation given by the exact evolution
\begin{equation}
\label{eqn:duhamel}
\psi( \tau) = \exp( \tau D_H) \Id (\psi_0) \,
\end{equation}
when well defined (by making the initial condition sufficiently high regularity) can be compared
through a double Duhamel expansion to the split-step generator
\begin{equation}
\label{eqn:dduhamel}
\psi_{SS} ( \tau) = \exp( \frac12 \tau D_T)  \exp( \tau D_V) \exp( \frac12 \tau D_T)  \Id (\psi_0) ,
\end{equation}
the error terms of which can be written using the Lie commutators.  Since the frequency mollifier we wish to include in the pseudo-spectral implementation commutes with the left-most $ \exp( \frac12 \tau D_T)$ iteration, we again proceed with the continuous estimates and recognize that in the end we will cut-off in frequency, which only reduces norms.

Indeed, the error estimates come from successive application of the quadrature first order error formula
\begin{equation}
\label{eqn:Peano1}
\tau f( \frac12 \tau ) - \int_0^\tau f(s) \ud s = \tau^2 \int_0^1 \kappa_1 (\theta) f' ( \theta \tau) \ud \theta
\end{equation}
and the second-order error formula
\begin{equation}
\label{eqn:Peano2}
\tau f( \frac12 \tau ) - \int_0^\tau f(s) \ud s  = \tau^3 \int_0^1 \kappa_2 (\theta) f'' ( \theta \tau) \ud \theta
\end{equation}
where $\kappa_1(\theta)$ and $\kappa_2(\theta)$ are the Peano kernels for the midpoint rule
and
\begin{equation*}  
f(s) = \exp ( ( \tau -s ) D_T) D_V \exp(s D_T) \Id (\psi_0)
\end{equation*}
and hence
\begin{align*}
f'(s) & = e^{i s \Delta} [ \hat T, \hat V] e^{i (\tau -s) \Delta} \psi_0,  \\
f''(s) & = e^{i s \Delta} [\hat T, [ \hat T, \hat V] ] e^{i (\tau -s) \Delta} \psi_0.
\end{align*}
Note, the Peano kernels are defined as the integral kernels of the linear transformation $$L: C^{k+1} [0,T] \to \RR$$ such that
\begin{equation*}
L (f) = f - \sum_{j=0}^k \frac{f^{(j)} (0) }{j!} T^j = \frac{1}{k!} \int_0^T \kappa_k (s) f^{(k+1)} (s) ds,
\end{equation*}
hence we observe that using the mid-point rule the $f'(t/2)$ term vanishes explaining why there is not a quadratic term in \eqref{eqn:Peano2}, though the expressions \eqref{eqn:Peano1} and \eqref{eqn:Peano2} can still vary due to the nature of the error term expansions in both cases.
Hence, it is essential that for the below we can prove that for our approximation we have $f(s) \in C^3$, which very much relates to the analyticity of the linear Schr\"odinger evolution kernel in the Strang splitting scheme as in particular a generic quasilinear Schr\"odinger flow cannot be shown to be more than $C^0$ by the purely dispersive techniques in \cite{MMT3,MMT4}. We will come back to this in more detail in Section~\ref{sec:Reg}.

Applying \eqref{eqn:duhamel}, \eqref{eqn:dduhamel}, \eqref{eqn:commutbd1}, \eqref{eqn:commutbd2} and \eqref{eqn:Liecont} in succession as in \cite{Lubich:08} gives
\begin{equation}
\label{eqn:quadconv}
\| u_{n,\epsilon} - u_\epsilon (t_n) \|_{H^1} \leq C (m_{K_0},T) \tau^2
\end{equation}
for $t_n = n \tau \leq T$ and $K_0 = \max (7, \frac{d + 6}{2}+\eta)$ for the mollified flow when $\tau$ is small compared to $\epsilon$.  

From the continuous point of view, in order to obtain the $\tau^2$ convergence here, it is important to compute the double commutator bound leading to \eqref{eqn:dduhamel} in order to expand out to $3$rd order in the Lie derivatives.  However, we note the same quadratic convergence would hold in $L^2$ with only $K_0 = \max (5, \frac{d + 5}{2}+\eta)$ as then the double Duhamel commutator would not be required and we would be mostly restricted by the well-posedness threshold for \eqref{eq:qls}.  
\end{proof}}

\begin{remark}
  So far we have considered the convergence of the time-splitting
    flow to the flow of the original PDE. We further discretize the
    spatial degree of freedom using a Fourier pseudo-spectral
    method. The convergence of the fully discretized scheme follows if
    we can show that the fully discretized scheme converges to the
    time-splitting flow. Though this is beyond the scope of our aim in
    this work. See for instance \cite{Gauckler:11, ShenWang:13,
      Thalhammer1,Thalhammer2} for analysis of fully discretized
    scheme for semilinear Schr\"odinger equations.  { The importance of analysis of the fully discretized scheme moving forward is quite clear from the necessity of mollifying the Strang splitting algorithm here.  The authors hope to consider this more carefully in future work.}
\end{remark}

{
\begin{remark}
Since the mollified equation becomes effectively semi-linear, one could pose the question as to whether or not much of the quasilinear analysis presented here is necessary for proof of convergence or if the semilinear tools from \cite{Lubich:08} for instance could be applied.  Actually, one can apply the semilinear techniques to the mollified flow, however the existence time of the model or the initial data would become exponentially small depending upon the $\epsilon$ threshold in the frequency cut-off.  Hence , using the quasilinear flow estimates is quite important in order to get uniform bounds.  
\end{remark}}

\section{Stability and instability of the numerical scheme}\label{sec:Stab}

As discussed in Section~\ref{sec:Num}, for large data, we observe blow-up behavior in the numerical study. In this section, we will investigate the numerical instability of the scheme, which will shed some light on the blow-up behavior. 

\subsection{Linear stability analysis for wave train} 

For the uniform wave trains solution \eqref{eq:planewave}, we study
the stability for perturbations around the solution. Consider a
perturbed solution of the form
\begin{equation}
  u(x, t) = u_0(x, t) ( 1 + \veps(x, t)), 
\end{equation}
where  $u_0$ is the plane-wave solution $a e^{i(kx-\omega t)}$ and $\abs{\veps}^2 \ll 1$. To the leading order, we get 
\begin{equation}
  i \veps_t + 2 i k \veps_x + \veps_{xx} = \abs{a}^2 ( \veps + \wb{\veps}) 
  + \abs{a}^2 (\veps + \wb{\veps})_{xx}. 
\end{equation}
Let us expand $\veps$ in Fourier series (with $\xi_n = n$ for $\veps$
periodic on $[0, 2\pi]$):
\begin{equation}
  \veps(x, t) = \sum_{n=-\infty}^{\infty} \wh{\veps}_n(t) \exp(i \xi_n x). 
\end{equation}
The equation of $\veps$ can then be written as a system of ODEs,
\begin{equation}
  \frac{\rd}{\rd t} 
  \begin{pmatrix}
    \wh{\veps}_n \\
    \wh{\wb{\veps}}_{-n}
  \end{pmatrix}
  = G_n   \begin{pmatrix}
    \wh{\veps}_n \\
    \wh{\wb{\veps}}_{-n}
  \end{pmatrix}, 
\end{equation}
where 
\begin{equation}
  G_n = i 
  \begin{pmatrix}
    - 2 k \xi_n - \abs{\xi_n}^2 - \abs{a}^2 + \abs{a}^2 \abs{\xi_n}^2
    & - \abs{a}^2 + \abs{a}^2 \abs{\xi_n}^2 \\
    \abs{a}^2 - \abs{a}^2 \abs{\xi_n}^2 & - 2 k \xi_n + \abs{\xi_n}^2
    + \abs{a}^2 - \abs{a}^2 \abs{\xi_n}^2
  \end{pmatrix}
\end{equation}
The eigenvalues of $G_n$, $\lambda_n$ is given by 
\begin{equation}
  \lambda_n = - 2 ik \xi_n \pm \abs{\xi_n} \sqrt{-\abs{\xi_n}^2 - 2 \abs{a}^2 
    + 2 \abs{a}^2 \abs{\xi_n}^2}
\end{equation}
The solution becomes unstable if one of the eigenvalues has a positive
real part or, equivalently,
\begin{equation}
  2\abs{a}^2 \abs{\xi_n}^2 - 2 \abs{a}^2 - \abs{\xi_n}^2 > 0 
\end{equation}
A sufficient condition for stability is 
\begin{equation}
  \abs{a} \leq \sqrt{2}/2. 
\end{equation}
Note that the stability threshold $\sqrt{2}/2$ agrees with the
  numerical observations in Section~\ref{sec:Num}.

\subsection{Linear Stability Analysis for the Strang splitting algorithm}

To study the stability of general initial data, where an explicit
  solution is not available, we linearize around a solution to
  \eqref{eq:qls}.  Locally, the linear instability is
essentially equivalent to the plane-wave instability observed in
Section \ref{s:pw}, and analyzed in the previous subsection.  The key
 observation is that the instability occurs for all $k$.  This
has been done in \cite[Equation $(8)$]{LPT}, where one observes that
perturbation around a solution $u = w + z$ leads to
\begin{equation}
\label{linearized}
z_t = a \left[ M_w \Delta z + G_w \nabla z + H_w z \right]  + f(t), \quad z(0) = g,
\end{equation}
where, for $w = w_1 + i w_2$, we observe
\begin{equation*}
  M_w = \left[ \begin{array}{cc}
2 w_1 w_2 & 2 w_2^2 - 1 \\
1- 2 w_1^2 & -2 w_1 w_2
\end{array} \right],
\end{equation*}
which has determinant $1- 2 |w|^2$.  The matrix functions $G_w$ and $H_w$ come from the linearization and will be expressed in full in \eqref{eq:linqls} below. Using this linearization and a Fr\'echet based iteration argument in the space $H^\infty$, the authors then show local well-posedness for small data solutions to equations of the form \eqref{eq:qls}.

To understand the instability of the numerical scheme, we linearize
the discretized Strang splitting algorithm and show that the linear
instability threshold in the continuous problem exists in the
discretized version as well.  Letting $u = w + z$ for some solution
$w$ of \eqref{eqn:thinfilm}, and generally writing $h(x,t) = h_1
(x,t) + i h_2 (x,t)$ for any complex function $h$, we have that
the linearized continuous PDE \eqref{linearized} takes the form
(see also \cite{LPT})
\begin{align}
\label{eq:linqls}
&\left[ \begin{array}{c}
 z_1 \\ z_2 \end{array} \right]_t = \left[ \begin{array}{cc} 
 2 w_1 w_2 \Delta & (2w_2^2 -1)\Delta \\
(1 - 2 w_1^2)  \Delta & -2 w_1 w_2 \Delta  \end{array} \right] \left[ \begin{array}{c}
z_1 \\ z_2 \end{array} \right] + \left[ \begin{array}{cc} 
2 w_2^2 + 2 w_1 w_2 & w_1^2 + w_2^2 \\
-[3 w_1^2 + w_2^2] & -2 w_1 w_2  \end{array} \right] \left[ \begin{array}{c}
z_1 \\ z_2 \end{array} \right]   \\
& \hspace{1.5cm} + \left[ \begin{array}{cc} 
 4 w_2 \nabla w_1 \cdot \nabla  & 4 w_2 \nabla w_2 \cdot \nabla\\
-\left[  4 w_1 \nabla w_1 \cdot \nabla \right] & -\left[4 w_1 \nabla w_2 \cdot \nabla \right]  \end{array} \right] \left[ \begin{array}{c}
z_1 \\ z_2 \end{array} \right]  \notag \\
& \hspace{1.5cm} +  \left[ \begin{array}{cc} 
2 w_2 \Delta w_1 &  \sum_{j=1}^2 2 \nabla \cdot (w_j \nabla w_j)  ) + 2w_2 \Delta w_2 \\
 -\left[  \sum_{j=1}^2 2 \nabla \cdot (w_j \nabla w_j)  + 2w_1 \Delta w_1  \right]  &- 2 w_1 \Delta w_2   \end{array} \right] \left[ \begin{array}{c}
z_1 \\ z_2 \end{array} \right].  \notag
\end{align}
We then have
\begin{equation*}
M_w = \left[ \begin{array}{cc}
2 w_1 w_2 & 2 w_2^2 - 1 \\
1- 2 w_1^2 & -2 w_1 w_2
\end{array} \right], \quad G_w =  \left[ \begin{array}{cc}
4 w_2 \nabla w_1^T  & 4 w_2 \nabla w_2^T \\
-4 w_1 \nabla w_1^T&  -4 w_1 \nabla w_2^T
\end{array} \right], 
\end{equation*}
\begin{align*} H_w & = \left[ \begin{array}{cc}
2 w_2^2 + 2 w_1 w_2  + 2 w_2 \Delta w_1 & H_{12}  \\
-H_{21} & -[ 2 w_1 w_2 + 2 w_1 \Delta w_2]
\end{array} \right] ,
\end{align*}
where
\begin{align*}
H_{12} & = w_1^2 + w_2^2 + 2 w_1 \Delta w_1 + 2 \nabla w_1 \cdot \nabla w_1 + 4 w_2 \Delta w_2 + 2 \nabla w_2 \cdot \nabla w_2, \\
H_{21} & = 3 w_1^2 + w_2^2+ 4 w_1 \Delta w_1 + 2 \nabla w_1 \cdot \nabla w_1 + 2 w_2 \Delta w_2 + 2 \nabla w_2 \cdot \nabla w_2 + 4 w_1 \nabla w_1.
\end{align*}

We wish to compare the linearization of the full PDE to the discretized linearization of the form
\begin{align}
  & z_{n+1/2}^- = e^{\frac{i}{2} \tau \Delta} z_n; \notag \\
  & z_{n+1/2}^+ = \exp\left(-i \tau
    \bigl(\abs{w_{n+1/2}^-}^2 + 
    \Delta (\abs{w_{n+1/2}^-}^2) \bigr)\right) \left( Id -i \tau \tilde{H}_{w_{n+1/2}^-}   \right) z_{n+1/2}^-  ;  \label{eqn:linsplitting} \\
  & z_{n+1} = e^{\frac{i}{2} \tau \Delta} z_{n+1/2}^+, \notag
\end{align}
where, taking $w_{n+1/2}^- = w_1 + i w_2$, we have
\begin{align*} \tilde{H}_{w_{n+1/2}^-} & =  \left[\begin{array}{cc} 
 2 w_1 w_2 \Delta & 2w_2^2 \Delta \\
- 2 w_1^2  \Delta & -2 w_1 w_2 \Delta \end{array} \right]  + \left[ \begin{array}{cc} 
2 w_2^2 + 2 w_1 w_2 & w_1^2 + w_2^2 \\
-[3 w_1^2 + w_2^2] & -2 w_1 w_2  \end{array} \right]    \\
& \hspace{.5cm} + \left[ \begin{array}{cc} 
 4 w_2 \nabla w_1 \cdot \nabla & 4 w_2 \nabla_n w_2 \cdot \nabla \\
-\left[  4 w_1 \nabla w_1 \cdot \nabla \right] & -\left[4 w_1 \nabla w_2 \cdot \nabla \right]  \end{array} \right]   \notag \\
& \hspace{.5cm} +  \left[ \begin{array}{cc} 
2 w_2 \Delta w_1 &   \sum_{j=1}^2 2 \nabla \cdot (w_j \nabla w_j)  + 2w_2 \Delta w_2 \\
-\left[ \sum_{j=1}^2 2 \nabla \cdot (w_j \nabla w_j)  + 2w_1 \Delta w_1  \right] & -\left[ 2 w_1 \Delta w_2  \right]  \end{array} \right] . \notag
\end{align*}

To address the linear stability of the Strang splitting scheme, consider a linearly unstable mode corresponding to \eqref{eq:linqls} such that $z$ is regular.  The linearized splitting scheme to the leading order works as 
\begin{align}
  & z_{n+1/2}^- = \left(Id + \frac{1}{2} \begin{pmatrix} & - \tau \Delta \\ \tau \Delta & 0 \end{pmatrix}\right)  z_n; \notag \\
  & z_{n+1/2}^+ = \left(Id +  \tau \begin{pmatrix}
 2 w_1 w_2 \Delta_n & 2w_2^2 \Delta_n \\
- 2 w_1^2  \Delta_n & -2 w_1 w_2 \Delta_n  \end{pmatrix}\right) z_{n+1/2}^-  ; \label{eq:linss}  \\
  & z_{n+1} = \left(Id + \frac{1}{2} \begin{pmatrix} & - \tau \Delta \\ \tau \Delta & 0 \end{pmatrix}\right) z_{n+1/2}^+, \notag
\end{align}
and hence 
\begin{equation}
\label{eq:ssheatflow}
   z_{n+1} = z_n  + \tau \begin{pmatrix} 
 2 w_1 w_2 \Delta & (2w_2^2 -1)\Delta \\
(1 - 2 w_1^2)  \Delta & -2 w_1 w_2 \Delta  \end{pmatrix} z_n + \mathcal{O} (\tau^2).
\end{equation}
As a result, if $w_1$ and $w_2$ are constant (as we can assume only locally with any accuracy) and $2 |w|^2 -1 > 0$, we observe that each Fourier mode $z_{n+1,k}$ can be approximated by the linearized dynamical system
\begin{equation}
   z_{n+1,k} = z_{n,k} + \tau \begin{pmatrix} 
 -2 w_1 w_2 k^2 & -(2w_2^2 -1) k^2 \\
(2 w_1^2-1) k^2 & 2 w_1 w_2 k^2  \end{pmatrix} z_{n,k} ,
\end{equation}
which has eigenvalues $1 \pm \tau k^2 \sqrt{ 2 |w|^2 -1}$ and hence would clearly grow exponentially for $|w| > \sqrt{2}/2$.  For sufficiently large $k$, we observe that all the Fourier modes of $w_1$ and $w_2$ are small perturbations and hence exponential growth will occur just as in the backwards heat flow generated from the continuous approximation.  Indeed, for $\tau$ sufficiently small, we have that the linearized Strang splitting flow well approximates the unstable backwards heat flow and hence displays linear instability.  However, of course, the nonlinear effects are ignored in this computation.  

To make this more precise, we observe from \cite{MMT4} for the full PDE model \eqref{eqn:model}, that we can construct initial data for \eqref{eqn:thinfilm} having $L^\infty$ norm larger than $\sqrt{2}/2$ but sufficiently localized in frequency such that the solution exists locally in time.  A simple example of a solution with a global existence time and initial $L^\infty$ norm larger than $\sqrt{2}/2$ is an exact plane-wave solution for the periodic problem.  However, perturbations of such exact solutions are still linearly unstable as calculated in Section $4.1$.  The backward heat equation that represents the linearization of the continuous model exists locally in time when considering frequency localized data; however, this time scale will depend on the frequency cut-off and potentially be quite short given the nonlinear interactions.  Using Theorem \ref{thm:conv}, we observe that, in the semi-discretized equation, choosing $\tau \ll 1$ sufficiently small compared to the scale of local existence for \eqref{eqn:thinfilm}, the numerical solution computed using the Strang splitting scheme is $\mathcal{O} (\tau^2)$ provided the solution is sufficiently regular.  Of course, from \eqref{eq:ssheatflow}, on a single time-step, the Strang splitting solution has a polynomial instability.  However, the linearized equations in \eqref{eq:linqls} give exponential growth dynamics for the full solution on the time scale of local existence.  Since, on the scale of existence, the numerical solution remains $\mathcal{O} (\tau^2)$, the linear instability is inherited by the numerical solution over repeated iterations of the time step.

\section{Regularity of the Time Evolution}
\label{sec:Reg}

In the analysis of the convergence of the Strang splitting scheme, we have used the analyticity of the linear Schr\"odinger evolution. However, this is not the case in general for the quasilinear Schr\"odinger evolution. The Strang splitting scheme actually regularizes the time flow of the original PDE. In this section, we give some further discussion for the regularity of the time evolution.

{  Let us show that the solution map of a quasilinear Schr\"odinger
evolution is continuous in time for \eqref{eq:qls}.  The continuity partially hinges upon
the proof of uniqueness for the evolution.  In
particular, take two solutions to a more general quasilinear model of the form \eqref{eqn:quasiquad}, say $u_1$ and $u_2$.}
Setting $v = u_1 - u_2$ and linearizing, we have an
equation of the form
\begin{equation*}
\left\{ \begin{array}{l}
i v_t + a^{jk} (u ) \p_j \p_k v + V \nabla v + W v  = 
0 ,  \\
v(0,x) = u_1 (0) - u_2 (0) 
\end{array} \right. 
\end{equation*}
with 
$$ V = V(u_1, \nabla u_1, u_2, \nabla u_2), \quad W = h (u_1, \nabla u_1, u_2, \nabla u_2) + g(u_1, u_2 )\nabla^2 u_1$$
for functions $V$, $h$ and $g$ related to the Taylor expansion of the metric and the nonlinearity. 
Then, to solve this linear equation, we use \cite[Proposition $5.1$]{MMT4} (see also \cite[Proposition $5.2$]{MMT3}) to show that the weak Lipschitz bound 
\begin{equation}\label{weak-lip}
\|  v \|_{L^\infty H^{\sigma}} \lesssim \|  v(0) \|_{H^{\sigma}}
\end{equation}
holds for any $0 \leq \sigma \leq s-1$ via energy estimates on the linearized equation, where we recall that the initial condition lies in $H^s$ for $s > \tfrac{d + 5}{2}$.  
However, in the well-posedness result for the linearized version of \eqref{eq:qls} (see Proposition $5.1$ of \cite{MMT4}, for instance) we have at most continuity of the solution map with respect to time in the $H^s$ norm.   

The key ideas of the proof follow from the theory of frequency envelopes as discussed in both \cite[Sections $2$ and $5$]{MMT3,MMT4}, where it is proven that the size of a dyadic frequency component of the solution to \eqref{eq:qls} in a natural energy space is bounded by a uniform constant times the corresponding dyadic frequency component of the initial data in $H^s$.  To be more precise, we shall use a
Littlewood-Paley decomposition of the spatial frequencies,
\[\sum_{i=0}^\infty S_i(D)=1,\]
where $S_i$ localizes to frequency $\abs{\xi} \in [2^{i-1}, 2^{i+1}]$ for $i>0$ and to frequencies $|\xi|\le 2$ for $i=0$. By a frequency envelope, we recall from \cite[Section $2.4$]{MMT3} that we mean that given a translation invariant space $U$ such that
\[
\|u\|_{U}^2 \sim \sum_{k=0}^\infty \|S_k u\|_{U}^2, 
\]
a frequency envelope for $u$ in $U$ is a positive sequence $a_j$ so
that
\begin{equation} 
\| S_j u\|_{U} \leq a_j \|u\|_{U}, \qquad 
 \sum a_j^2 \approx 1.
\end{equation}
We say that a frequency envelope is admissible if $a_0\approx 1$ and 
it is slowly varying,
\[
a_j \leq 2^{\delta |j-k|} a_k, \qquad j,k \geq 0 , \qquad 0 < \delta
\ll 1.
\]
An admissible frequency envelope 
always exists, say by 
\begin{equation}\label{freqEnv}
a_j = 2^{-\delta j} + \|u\|^{-1}_{U} \max_k 2^{-\delta |j-k|} \|S_k u\|_{U}.
\end{equation}
Abusing notation and avoiding for simplicity the atomic space formulations in \cite{MMT3,MMT4}, we rely upon a uniform bound over the evolution such that effectively
\begin{equation}\label{mainb}
\| u\|_{L^\infty H^s} \lesssim \|u_0\|_{H^s}.
\end{equation}
We note that the $L^\infty H^s$ norm appearing in the estimate here is due to the cubic interactions in the nonlinearity and the compactness of our domain, otherwise one must enforce further summability as in \cite{MMT3}.  A key estimate is the following proposition.

\begin{prop}[Proposition $5.3$, \cite{MMT3}; Proposition $5.4$, \cite{MMT4}]
\label{envelopes}
Let $u$ be a small data solution to \eqref{eq:qls}, which satisfies
\eqref{mainb}. Let $\{a_j\}$ be an admissible frequency envelope for the initial
data $u_0$ in $H^s$. Then $\{a_j\}$ is also a frequency envelope for $u$ 
in $L^\infty H^s$.
\end{prop}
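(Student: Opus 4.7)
The plan is to localize \eqref{eq:qls} to dyadic frequency bands and reduce to the linear estimates already developed in the excerpt. First, I would apply the Littlewood--Paley projector $S_j$ to \eqref{eq:qls} and commute it past the quasilinear metric to obtain the schematic inhomogeneous linearized equation
\[
i \partial_t (S_j u) + a^{jk}(u)\,\partial_j \partial_k (S_j u) = [S_j,\, a^{jk}(u)]\,\partial_j \partial_k u + S_j F(u,\nabla u),
\]
so that $S_j u$ solves a linear Schr\"odinger equation with initial datum $S_j u_0$ satisfying $\|S_j u_0\|_{H^s} \leq a_j \|u_0\|_{H^s}$ by admissibility of the envelope.

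Next, I would invoke the inhomogeneous version of the weak Lipschitz estimate underlying \eqref{weak-lip} from \cite[Proposition~5.1]{MMT4}, giving
\[
\|S_j u\|_{L^\infty H^\sigma} \lesssim \|S_j u_0\|_{H^\sigma} + \|\text{RHS}\|_{L^1 H^\sigma}
\]
(or the atomic-space analog actually used in \cite{MMT3, MMT4}). Evaluating at $\sigma = s$ and absorbing the $2^{js}$ weights, this reduces the proposition to showing that the $L^1_t H^s$ norm of the right-hand side is bounded by $C a_j \|u_0\|_{H^s}$, modulo a contribution that can be reabsorbed into the left side via the smallness of $\|u_0\|_{H^s}$.

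The heart of the argument, and the principal obstacle, is to control the commutator $[S_j, a^{jk}(u)]\,\partial_j \partial_k u$ in a manner compatible with frequency envelopes. I would use a Bony paraproduct decomposition, splitting both the metric correction $a^{jk}(u) - I$ (Taylor-expanded in $u$ and $\nabla u$) and the twice-differentiated factor $\partial_j\partial_k u$ into dyadic blocks at frequencies $2^{j_1}, 2^{j_2}, 2^{j_3}$. In the high-high and high-low interaction regimes the commutator structure transfers one of the two derivatives lost in $\partial_j\partial_k u$ onto the low-frequency metric coefficient; this is the standard cancellation exploited in \cite{MMT3, MMT4}. Each resulting trilinear (or higher, because the metric is an entire function of $u$) contribution is then estimated dyadically by a product of envelopes $a_{j_1}a_{j_2}a_{j_3}$ against dyadic weights coming from Sobolev embeddings of the type used in \eqref{eqn:tri2}. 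Using the slow-variation property $a_{j_i} \leq 2^{\delta|j-j_i|} a_j$ and summing over the $j_i$'s whose output frequency is $\sim 2^j$ collapses the sum into a single $a_j$ times a constant proportional to $\|u_0\|_{H^s}^2$, which is small. The cubic source term $S_j F(u,\nabla u)$ is handled identically since $F = O(|u|^3 + |\nabla u|^3)$. A final bootstrap on $[0,T]$, using \eqref{mainb} as the a priori input, closes the estimate and confirms that $\{a_j\}$ is indeed a frequency envelope for $u$ in $L^\infty H^s$.
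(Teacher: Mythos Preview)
The paper does not actually prove this proposition: it is quoted verbatim as Proposition~5.3 of \cite{MMT3} and Proposition~5.4 of \cite{MMT4}, and the surrounding text in Section~\ref{sec:Reg} only \emph{uses} the result to establish continuity of the solution map. So there is no in-paper proof to compare against.

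That said, your sketch is the right shape and broadly matches the argument in \cite{MMT3,MMT4}: localize with $S_j$, treat $S_j u$ as a solution to a linearized equation with coefficients frozen at low frequency, and push the commutator and nonlinear source through a paraproduct decomposition together with the slow-variation property of admissible envelopes. The main place your outline diverges from the cited proofs is the function-space framework. The estimate you write as $\|S_j u\|_{L^\infty H^\sigma} \lesssim \|S_j u_0\|_{H^\sigma} + \|\text{RHS}\|_{L^1 H^\sigma}$ is not what closes the argument in \cite{MMT3,MMT4}; one needs the local smoothing spaces $l^2 X^\sigma$ and their duals $l^2 Y^\sigma$ (cf.\ the brief review in Section~\ref{sec:mollify} of this paper), because a raw $L^1_t H^s$ norm on the commutator loses a derivative that the smoothing gain recovers. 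You flag this yourself with the parenthetical about ``the atomic-space analog actually used,'' but that is not a cosmetic swap---it is the mechanism that makes the bootstrap close. Apart from that, the structure you describe (Bony decomposition of $[S_j, a^{kl}(u)]\partial_k\partial_l u$, transfer of a derivative onto the low-frequency coefficient, summing via $a_{j_i} \leq 2^{\delta|j-j_i|} a_j$) is exactly the content of the cited propositions.
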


Once we have Proposition \ref{envelopes}, the continuity of the solution map can be established as is Section $5.7$ of \cite{MMT3}.  Namely, we consider a sequence of initial data $\{ u_0^{n} \} \to u_0$ in $H^s$.  Frequency envelope bounds can then be chosen such that there exists a uniform $N_\epsilon$ for which
\begin{equation*}
\| a_{N_\epsilon}^{(n)} \| \leq \epsilon
\end{equation*}
for all $n$, which gives a uniform upper bound by Proposition \ref{envelopes} on the high frequencies of each corresponding solution $u^{(n)}$ to \eqref{eq:qls} with initial data $u_0^{(n)}$ in the $L^\infty H^s$ norm.  Separating into low and high frequencies and using the smallness of the high frequencies and the uniform convergence in weaker Sobolev norms provided by \eqref{weak-lip}, the result follows.  

Let us emphasize however that we generally gain no more than continuity of the solution map from such arguments.  Hence, in order to accurately compare the flow of the full solution map defined by \eqref{eq:qls} and that of the Strang splitting method, we rely on differentiating the equation and the balancing of spatial and time regularity, as in Section \ref{sec:Lie}. 

{

\section{Convergence of the Pseudospectral Flow}
\label{sec:mollify}

In this section, we address the closeness of the flow from the continuous equation to a mollified equation representing the effects of a full pseudo-spectral discretization scheme.  Namely, we take 
\begin{equation}
\label{qls:moll}
i u_{\epsilon,t} + u_{\epsilon,xx} =  G_\epsilon *[  | u_\epsilon |^2 u_\epsilon ] + G_\epsilon *[  \Delta ( |u_\epsilon |^2) u_\epsilon],
\end{equation}
for $G_\epsilon \in C^\infty_c$ a smooth, compactly supported mollifier such that if $u \in H^s$, we have
\begin{equation*}
  \| G_\epsilon u - u \|_{H^{s-1}} \leq C \eps \norm{u}_{H^{s}}
\end{equation*}
as $\veps \to 0$.  Note, this is essentially an exponentially decaying
cut-off in frequency space $S_{<N_{\epsilon}}$ for $N_\epsilon = \Or(1
/ \epsilon)$. We wish to compare this to the evolution of
\eqref{eq:superfluid}.

\begin{prop}\label{prop:mollify}
For $ \|u_0 \|_{H^s} \ll 1$ sufficiently small with $s > 3$, \eqref{qls:moll} has a solution that exists for time $1$ and remains sufficiently small.  In addition, if $u$ solves \eqref{eq:superfluid} we have $\| u - u_\epsilon \|_{L^\infty ([0,1] \times H^\sigma)} = \Or(\eps)$ as $\epsilon \to 0$ for all $\sigma < s-3$. 
\end{prop}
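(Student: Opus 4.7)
The plan is to establish existence of $u_\epsilon$ uniformly in $\epsilon$, then compare it to $u$ by treating the difference as a solution of a quasilinear linearized equation with an $\Or(\epsilon)$ source. For existence of $u_\epsilon$, I would adapt the frequency envelope machinery of \cite{MMT3,MMT4}: the mollifier $G_\epsilon$ only truncates high frequencies inside the nonlinearity, so all of the multilinear and energy estimates used to control $u$ in $L^\infty H^s$ carry over to $u_\epsilon$ with constants independent of $\epsilon$. This yields a uniform bound $\|u_\epsilon\|_{L^\infty([0,1];H^s)} \lesssim \|u_0\|_{H^s}$ for small data, and propagates the admissible frequency envelope of $u_0$ to $u_\epsilon$ exactly as in Proposition~\ref{envelopes}.

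Next, setting $v = u - u_\epsilon$, I would split each nonlinear term into a Lipschitz piece and a mollifier remainder,
\begin{align*}
|u|^2 u - G_\epsilon * [|u_\epsilon|^2 u_\epsilon] & = \bigl(|u|^2 u - |u_\epsilon|^2 u_\epsilon\bigr) + (\Id - G_\epsilon) * [|u_\epsilon|^2 u_\epsilon], \\
\Delta(|u|^2) u - G_\epsilon * [\Delta(|u_\epsilon|^2) u_\epsilon] & = \bigl(\Delta(|u|^2) u - \Delta(|u_\epsilon|^2) u_\epsilon\bigr) + (\Id - G_\epsilon) * [\Delta(|u_\epsilon|^2) u_\epsilon].
\end{align*}
Expanding the Lipschitz pieces around $u_\epsilon$ yields a quasilinear linearized equation of the form \eqref{eq:linqls} with coefficients built from $u$ and $u_\epsilon$, while the remainders are pure source terms. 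Using the mollifier estimate $\|(\Id - G_\epsilon) f\|_{H^{s-3}} \leq C \epsilon \|f\|_{H^{s-2}}$ together with the uniform $H^s$ bound on $u_\epsilon$, both remainders are of size $\Or(\epsilon)$ in $L^\infty([0,1]; H^{s-3})$.

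Because $v(0) = 0$, I would then apply the linearized energy estimate underlying \eqref{weak-lip} (\cite[Proposition~5.1]{MMT4}) to the inhomogeneous equation satisfied by $v$ in $H^\sigma$. The homogeneous linearization is valid for $0 \leq \sigma \leq s-1$, but inserting the $\Or(\epsilon)$ source at $H^{s-3}$ via Duhamel forces $\sigma \leq s-3$; to retain the smoothness margin needed by the multilinear estimates that compare coefficients around $u$ and $u_\epsilon$, I take $\sigma < s-3$. A Gronwall argument then gives $\|v\|_{L^\infty([0,1]; H^\sigma)} \lesssim \epsilon$ uniformly in $\epsilon$.

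The main obstacle is verifying that the quasilinear energy estimates of \cite{MMT4} apply uniformly around the moving base state $u_\epsilon$, since the coefficient matrix of the linearized operator varies with $\epsilon$; this is where the smallness assumption $\|u_0\|_{H^s} \ll 1$ plays its crucial role, keeping $u_\epsilon$ in the perturbative regime where those estimates close with constants independent of $\epsilon$. A secondary issue is that $\Delta(|u|^2) u - \Delta(|u_\epsilon|^2) u_\epsilon$ contains a genuine $\Delta v$ contribution, so it cannot be treated as a pure source but must instead be absorbed into the principal symbol of the equation for $v$, which is precisely why the weak Lipschitz estimate \eqref{weak-lip} rather than a naive Strichartz bound is needed.
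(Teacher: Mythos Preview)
Your approach is essentially the same as the paper's: both invoke the \cite{MMT4} well-posedness machinery to get uniform-in-$\epsilon$ bounds on $u_\epsilon$, then feed the difference $v=u-u_\epsilon$ through the weak Lipschitz linearized estimate \eqref{weak-lip} with an $\Or(\epsilon)$ source coming from the mollifier remainder, and your decomposition into Lipschitz piece plus $(\Id-G_\epsilon)$ remainder is exactly the mechanism behind the paper's displayed inequality. One small correction: since the mollified flow is constrained by $u_\epsilon=G_\epsilon u_\epsilon$, its initial datum is $G_\epsilon u_0$ rather than $u_0$, so $v(0)=(1-G_\epsilon)u_0$ is not zero but is itself $\Or(\epsilon)$ in $H^\sigma$ for $\sigma<s-1$; this extra term appears explicitly in the paper's estimate and is absorbed by the same argument.
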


\begin{proof}
Intuitively, we rely on the paradifferential scheme providing the frequency envelope bounds of \cite{MMT4}, which states that at least for small enough data with enough regularity, on time $1$ intervals the high frequencies do not change the flow very much.  In particular, we use the fact that the flow of both equations is well-defined in $H^s$ for $s > \frac{d+5}{2}$.  

Let us for the sake of completeness briefly review of Paradifferential Estimates from \cite{MMT4}.  We are interested for our particular numerical purposes in \eqref{eq:qls}, but as the results are also true in higher dimensions, let us work  with a more general quasilinear equation of the form \eqref{eqn:quasiquad}.  We use a Picard iteration scheme to boil down finding a solution to solving the linear problem
\begin{eqnarray}
\label{lin1}
\begin{cases}
  (i\partial_t + \partial_k a^{kl} (w) \partial_l)u + V\nabla u + Wu=H,\\
u(0)=u_0
\end{cases}
\end{eqnarray}
and 
\begin{eqnarray}
\label{lin2}
\begin{cases}
  (i\partial_t + \partial_k a^{kl} (w) \partial_l)u + V\nabla u=H,\\
u(0)=u_0
\end{cases}
\end{eqnarray}
under the assumption that
\[g^{kl}-\delta^{kl} = h^{kl}(w(t,x))  \]
where $h(z)=O(|z|^2)$ near $|z|=0$ with $w$ a small function in an energy space and $H$ a generic forcing term that is small in the dual to that energy space at the moment.  We include $H$ such that the error term from frequency cut-offs can be included below.  

We want to use a paradifferential scheme such that at $u_j$ at frequency $j$ solves
\[
\begin{cases}
 (i\partial_t + \partial_k a^{kl}_{<j-4}\partial_l)u_j = G_j + H_j,\\
u_j(0)=u_{0j},
\end{cases}
\]
where $a^{kl}_{<j-4} = S_{< j-4} a^{kl}$ is cut-off to slightly lower frequencies and hence
\[G_j = -S_j\partial_k g^{kl}_{>j-4}\partial_l u - [S_j, \partial_k
g^{kl}_{<j-4}\partial_l]u - S_j V \nabla u - S_j Wu.\]
Then, we construct a full solution by summing up in frequency.  

Applying the general energy estimates from Proposition $4.1$ to each of these
equations, we see that
\[\|u\|^2_{l^2 X^\sigma} \lesssim \|u_0\|^2_{H^\sigma} +
\|H\|_{l^2Y^\sigma}^2 + \sum_j \|G_j\|^2_{l^2Y^\sigma}.\]
If $W=0$, we can take $\sigma = s$, otherwise, we work with $\sigma = s-1$.  However, these estimates are strong enough to give a bootstrapping argument.  The spaces $l^2 X^\sigma$, $l^2 Y^\sigma$ here require using smoothing properties of the linear Schr\"odinger equation and will not be discussed in detail here.  See \cite{MMT4} for more details on their construction.   

Convergence estimates in Sobolev spaces follows directly from energy estimates for the truncated equation and the frequency envelope analysis in Section \ref{sec:Reg} on solutions to \eqref{eq:superfluid}.   Note that we are make no claims that our convergence estimates for the pseudospectral scheme are sharp, and in fact being more careful with convergence estimates above might improve future results.  Since we are largely worried about the $L^2$ and $H^1$ convergence, there is a relatively simple approach inspired by Propositions $5.1$ and $5.2$ from \cite{MMT4} that give a frequency envelope decomposition for the solution $u$ of \eqref{eq:superfluid}.  We observe 
\begin{align*}
& \| u - u_\epsilon \|_{L^\infty ([0,1] \times H^{\sigma})} \leq \\
& \hspace{1cm} C (\| (1- G_\epsilon) u_0 \|_{H^\sigma} + \| | u|^2 u - G_\epsilon ( | G_\epsilon u |^2 G_\epsilon u ) \|_{H^\sigma} + \| (| u|^2)_{xx} u - G_\epsilon ( (| G_\epsilon u |^2 )_{xx} G_\epsilon u ) \|_{H^\sigma} \\
& \hspace{2cm} \leq C(\norm{u}_{L^{\infty}H^{\sigma + 3}})  \epsilon, 
\end{align*}
where we have emphasized the dependence on the constant in the final
inequality on $\norm{u}_{L^{\infty}H^{\sigma + 3}}$.  The convergence
is easily controlled using to the frequency envelopes of $u$ and hence
the nonlinear expressions of $u$ using that $s > d/2$ and the
smallness of $u$.  As a result, if the initial datum has a small $\norm{\cdot}_{H^{\sigma+3}}$ norm, $\norm{u}_{L^{\infty}H^{\sigma + 3}}$ is controlled, and hence the difference between $u$ and $u_{\epsilon}$. \end{proof}


}

\section{Conclusion}

In this work, we have studied the Strang splitting scheme for
quasilinear nonlinear Schr\"odinger equations. The splitting scheme is
proved to have second order convergence for small data. We further
investigate the regularity of the time flow and the instability of the
numerical scheme which leads to blow-ups observed numerically.

Our work is motivated by numerical approaches towards time-dependent
density functional theory computations as discussed in the
documentation of the software package \textsf{Octopus}
\footnote{\url{http://www.tddft.org/programs/octopus/wiki/index.php/Main_Page}}
and also \cite{CMR:2004} and references therein. The mathematical
analysis and numerical schemes for time-dependent density functional
theory will be an exciting direction to explore in the future.

\bibliographystyle{plain}

\end{document}